\newif\ifGD
\newif\ifcmts
\let\doendproof\endproof
\renewcommand\endproof{~\hfill\qed\doendproof}
\newcommand{\keywords}[1]{\par\addvspace\baselineskip
\noindent\keywordname\enspace\ignorespaces#1}
\newtheorem{numclaim}[theorem]{Claim}
\newtheorem{theorem}{Theorem}[section]
\newtheorem{lemma}[theorem]{Lemma}
\newtheorem{corollary}[theorem]{Corollary}
\newtheorem{proposition}[theorem]{Proposition}
\newtheorem{claim}[theorem]{Claim}
\newtheorem*{claim*}{Claim}
\theoremstyle{definition}
\theoremstyle{remark}
\newtheorem{example}[theorem]{Example}
\newtheorem*{example*}{Example}
\newtheorem{remark}[theorem]{Remark}
\newcommand\R{\ensuremath{\mathbb{R}}}
\newcommand\Z{\ensuremath{\mathbb{Z}}}
\newcommand{\heading}[1]{\vspace{1ex}\par\noindent{\bf\boldmath #1}}
\renewcommand\:{\colon}
\DeclareMathOperator{\supp}{supp}
\DeclareMathOperator{\im}{im}
\DeclareMathOperator{\smod}{mod} 
\newcommand{\cut}[2]{#1_{\langle #2\rangle}} 		
\newcommand{\sign}[2]{[#2:#1]}
\newcommand{\fno}{\hat{f}}
\newcommand\surf{{\mathcal{M}}}
\newcommand\surff{{\mathcal{P}}}
\newcommand\nsurf{{\mathcal{N}}}
\newcommand\lsys{{L}}
\newlength{\fparwidth}
\newcommand\framedpar[1]{\begin{center}\framebox{~\begin{minipage}
{\fparwidth}\vspace{1mm}#1\vspace{1mm}
\end{minipage}~}\end{center}}
\newcommand{\maxmn}{L}
\long\def\onefigure#1#2{
\begin{figure*}[tbp]
\begin{center}
#1
\end{center}
\caption{#2}
\end{figure*}
}
\def\immediateFigure#1{%
\smallskip\begin{center}#1\end{center}\smallskip }
\newcommand{\labfig}[2]  
{\onefigure{\mbox{\includegraphics{#1}}}{\label{f:#1} #2} }
\newcommand{\labfigw}[3]  
{\onefigure{\mbox{\includegraphics[width=#2]{#1}}}{\label{f:#1} #3}}
\newcommand{\labfigscale}[3]
{\onefigure{\mbox{\includegraphics[scale=#2]{#1}}}{\label{f:#1} #3}}
\newcommand{\immfig}[1]  
{\immediateFigure{\mbox{\includegraphics{#1}}}}
\newcommand{\immfigw}[2] 
{\immediateFigure{\mbox{\includegraphics[width=#2]{#1}}}}
\newcommand{\marrow}{\marginpar{\boldmath$\longleftarrow$}}
\definecolor{violet}{rgb}{0.5,0,0.5}
\newcommand{\jirka}[1]{\ifcmts\ifhmode\newline\fi\marrow \textsf{\color{magenta}*** (JIRKA: ) #1\newline}\else\fi}
\newcommand{\martin}[1]{\ifcmts\ifhmode\newline\fi\marrow \textsf{\color{violet}***
(Martin: ) #1\newline}\else\fi}
\newcommand{\uli}[1]{\ifcmts\ifhmode\newline\fi\marrow \textsf{\color{cyan}***
(ULI: ) #1\newline}\else\fi}
\title{Untangling two systems of noncrossing curves}
\author[1,2,a,b]{Ji\v{r}\'{\i} Matou\v{s}ek}
\author[3,a]{Eric Sedgwick}
\author[1,4,a,c]{Martin Tancer}
\author[5,a,d]{Uli Wagner}
\affil[1]{Department
of Applied Mathematics, 
Charles University, Malostransk\'{e} n\'{a}m.
25, 118~00~~Praha~1, Czech Republic.}
\affil[2]
{Department of Computer Science, ETH Z\"urich, 8092 Z\"urich,
Switzerland}
\affil[3]{School of Computing, DePaul University, 243 S.~Wabash Ave, 
Chicago, IL 60604, USA}
\affil[4]{Institutionen f\"{o}r matematik, Kungliga Tekniska
H\"{o}gskolan, 100~44 Stockholm.}
\affil[5]{IST Austria, Am Campus 1, 3400 Klosterneuburg, Austria}
\affil[a]{Supported by the ERC Advanced Grant No.~267165.}
\affil[b]{Partially supported by Grant GRADR Eurogiga GIG/11/E023.}
\affil[c]{Supported by a G\"{o}ran Gustafsson
postdoctoral fellowship.}
\affil[d]{Supported by the Swiss National Science Foundation (Grant SNSF-PP00P2-138948).}
\begin{document}

\ifGD 

\mainmatter  

\title{Untangling two systems of noncrossing curves\thanks{Research supported by the ERC Advanced Grant No.~267165. Moreover, the research of J.M.\ was also partially supported by Grant GRADR Eurogiga GIG/11/E023, the research of M.T.\ was supported by a G\"{o}ran Gustafsson postdoctoral fellowship, and the research of U.W.\ was supported by the Swiss National Science Foundation (Grant SNSF-PP00P2-138948) }}

\titlerunning{Untangling two systems of noncrossing curves}

\author{Ji\v{r}\'{\i} Matou\v{s}ek
\inst{1}$^,$\inst{2} \and Eric Sedgwick\inst{3} \and Martin Tancer\inst{1}$^,$\inst{4} \and Uli Wagner\inst{5}}

\authorrunning{Matou\v{s}ek, Sedgwick, Tancer, Wagner}

\institute{
Department
of Applied Mathematics, 
Charles University, Malostransk\'{e} n\'{a}m.
25, 118~00~~Praha~1, Czech Republic
\and
Institute of Theoretical Computer Science, ETH Z\"urich, 8092 Z\"urich,
Switzerland
\and
School of Computing, DePaul University, 243 S.~Wabash Ave, 
Chicago, IL 60604, USA
\and Institutionen f\"{o}r matematik, KTH, 100~44 Stockholm, Sweden
\and
IST Austria, Am Campus 1, 3400 Klosterneuburg, Austria
}
\fi
\maketitle

\begin{abstract}
We consider two systems $(\alpha_1,\ldots,\alpha_m)$
and $(\beta_1,\ldots,\beta_n)$ of simple curves drawn on a compact two-dimensional surface $\surf$ with boundary. 
\ifGD\else

\fi
Each $\alpha_i$ and each $\beta_j$ is either an arc meeting the
boundary of $\surf$ at its two endpoints, or a closed curve.
The $\alpha_i$ are pairwise disjoint except for possibly
sharing endpoints, and similarly for the $\beta_j$. We want to ``untangle''
the $\beta_j$ from the $\alpha_i$ by a self-homeomorphism of $\surf$;
more precisely, we seek a homeomorphism $\varphi\:\surf\to\surf$ fixing the
boundary of $\surf$ pointwise such that the total number of crossings of the $\alpha_i$
with the $\varphi(\beta_j)$ is as small as possible. This problem is
motivated by an application in the algorithmic theory of embeddings and $3$-manifolds.

We prove that if $\surf$ is planar, i.e., a sphere with $h\geq 0$ boundary components (``holes''), 
then $O(mn)$ crossings can be achieved (independently of $h$), which is asymptotically
tight, as an easy lower bound shows.
\ifGD\else

\fi
In general, for an arbitrary (orientable or nonorientable) surface $\surf$ with $h$ holes and of 
(orientable or nonorientable) genus $g\geq 0$, we obtain an $O((m+n)^4)$ upper bound, 
again independent of $h$ and~$g$.
\ifGD\else

The proofs rely, among other things, on a result concerning simultaneous planar drawings of graphs by 
Erten and Kobourov.
\fi

\ifGD\keywords{Curves on $2$-manifolds, simultaneous planar drawings,
Lickorish's theorem}\fi
\end{abstract}

\section{Introduction}
\label{s:intro}

Let $\surf$ be a surface, by which we mean a two-dimensional
compact manifold with (possibly empty) boundary $\partial\surf$. (Unless stated
otherwise, we work with connected surfaces.)

By the classification theorem for surfaces, if $\surf$ is orientable, then 
$\surf$ is homeomorphic to a sphere with $h\ge 0$ \emph{holes} and $g\ge 0$ 
attached handles \ifGD\else (see Fig.~\ref{f:s32})\fi; 
the number $g$ is also called the \emph{orientable genus} of $\surf$.
If $\surf$ is nonorientable, then it is homeomorphic to a sphere with $h\geq 0$ holes and with 
$g\geq 0$ \emph{cross-caps};\ifGD\ \else\footnote{A cross-cap is obtained by removing a small disc from
$\surf$ and gluing in a M\"{o}bius band along its boundary to the boundary circle of the resulting 
hole.}
 \fi 
in this case, the integer $g$ is known as the \emph{nonorientable genus} of $\surf$. 
In the sequel, the word ``genus'' will mean orientable genus for orientable surfaces and nonorientable genus for nonorientable surfaces.

We will consider curves in $\surf$ that are \emph{properly embedded}, i.e., every 
curve is either a simple arc meeting the boundary $\partial\surf$ exactly at its two endpoints,
or a simple closed curve avoiding $\partial\surf$. An \emph{almost-disjoint 
system of curves} in $\surf$ is a collection $A=(\alpha_1,\ldots,\alpha_m)$ 
of curves that are pairwise disjoint except for possibly sharing
endpoints.\footnote{We use ordered collections of curves just because of the
convenience of the notation.}

In this paper we consider the following problem: We are given two 
almost-disjoint systems $A=(\alpha_1,\ldots,\alpha_m)$ and
$B=(\beta_1,\ldots,\beta_n)$ of curves in $\surf$, where the curves
of $B$ intersect those of $A$ possibly very many times,
as in Fig.~\ref{f:intro-ex}(a). We would like to ``redraw'' the curves of $B$
in such a way that they intersect those of $A$ as little as
possible.
\ifGD
\labfigscale{intro-ex}{1}
{Systems $A$ and $B$ of curves on a surface $\surf$, with $g=0$ and $h=3$ (a), and a re-drawing of $B$ via
a $\partial$-automorphism $\varphi$  reducing
the number of intersections (b).}
\else
\labfig{intro-ex}
{Systems $A$ and $B$ of curves on a surface $\surf$, with $g=0$ and $h=3$ (a), and a re-drawing of $B$ via
a $\partial$-automorphism $\varphi$ (composed of an isotopy and a \emph{Dehn twist} of the darkly shaded annular region, see below) 
so that the number of intersections is reduced (b).}
\fi

We consider re-drawings only in a restricted sense, namely, induced
by \emph{$\partial$-automorphisms} of $\surf$, where a $\partial$-automorphism
is a homeomorphism $\varphi\:\surf\to\surf$ that fixes 
the boundary $\partial\surf$ pointwise.\footnote{In general, by an automorphism
we mean a self-homeomorphism.} Thus, given the $\alpha_i$ and the $\beta_j$, 
we are looking for a $\partial$-automorphism $\varphi$ such that the number of 
intersections (crossings) between $\alpha_1,\ldots,\alpha_m$ and $\varphi(\beta_1),\ldots,
\varphi(\beta_n)$ is as small as possible (where sharing endpoints
does not count). We call this minimum number of crossings achievable through
any choice of $\varphi$ the \emph{entanglement number}
of the two systems $A$ and $B$.

In the orientable case, let $f_{g,h}(m,n)$ denote the maximum entanglement number
of any two systems $A=(\alpha_1,\ldots,\alpha_m)$ and $B=(\beta_1,\ldots,\beta_n)$
of almost-disjoint curves on an orientable surface of genus $g$ with $h$ holes.
Analogously, we define $\fno_{g,h}(m,n)$ as the maximum entanglement number of any two systems
$A$ and $B$ of $m$ and $n$ curves, respectively, on a nonorientable surface of genus $g$ with $h$ holes.
It is easy to see that $f$ and $\fno$ are nondecreasing in  $m$ and $n$, which we will
often use in the sequel.

To give the reader some intuition about the problem,
let us illustrate which re-drawings are possible with
a $\partial$-automorphism and which are not.
In the example of Fig.~\ref{f:intro-ex}, it is clear that
the two crossings of $\beta_3$ with $\alpha_3$ can be avoided
by sliding $\beta_3$ aside.\footnote{This corresponds to an \emph{isotopy} of the surface
that fixes the boundary pointwise.} It is perhaps less obvious that the crossings of $\beta_2$ can also be eliminated: 
To picture a suitable $\partial$-automorphism, one can think of an annular
region in the interior of $\surf$, shaded darkly in Fig.~\ref{f:intro-ex}~(a), that surrounds the left hole 
and $\beta_1$ and contains most of the spiral formed by $\beta_2$. 
Then we cut $\surf$ along the outer boundary of that annular region, 
twist the region two times (so that the spiral is unwound), 
and then we glue the outer boundary back. 
Here is an 
example of a single twist of an annulus;
straight-line curves on the left are transformed to spirals on the right
\ifGD. \else (this kind of homeomorphism
is often called a \emph{Dehn twist}).\footnote{Formally, if we consider the circle $S^1=\R/2\pi \Z$ parameterized by angle,
then a single Dehn twist of the standard annulus $\mathcal{A}=S^1\times [0,1]$ is the $\partial$-automorphism of $\mathcal{A}$
given by $(\theta,r)\mapsto(\theta+2\pi r,r)$. Being a $\partial$-automorphism of the annulus, a Dehn twist of an annular region 
contained in the interior of a surface $\surf$ can be extended to a $\partial$-automorphism of $\surf$ by defining it to be the identity map
outside the annular region.} \fi
\immfigw{dehn}{7cm}


On the other hand, it is impossible to eliminate the crossings
of $\beta_1$ or $\beta_3$ with $\alpha_2$ by a $\partial$-automorphism.
For example, we cannot re-route $\beta_1$ to go around the right hole
and thus avoid $\alpha_2$, since this re-drawing is not induced by
any $\partial$-automorphism $\varphi$: indeed, $\beta_1$ separates the point $x$
on the boundary of left hole from
 the right hole, whereas $\alpha_2$ does not separate them; 
therefore, 
the curve $\alpha_2$ has to intersect $\varphi(\beta_1)$ at least
twice, once when it leaves the component containing $x$ and once when it
returns to this component.


A rather special case of our problem, with $m=n=1$ and only closed curves,
was already considered by Lickorish \cite{lickorish}, who 
showed that the intersection of a pair of simple
closed curves can be simplified via Dehn twists (and thus a
 $\partial$-automorphism)
so that they meet at most twice (also see Stillwell \cite{stillwell}).
The case with $m=1$, $n$ arbitrary, only closed curves, and $\surf$
possibly nonorientable was proposed in 2010 as a Mathoverflow
question \cite{overflow} by T.~Huynh. In an answer A.~Putman
proposes an approach via the ``change of coordinates principle''
(see, e.g., \cite[Sec.~1.3]{farbMargalit}), which relies on the
classification of 2-dimensional surfaces---we will also use it at some
points in our argument.

\heading{The results. } 
A natural idea for bounding $f_{g,h}(m,n)$ and $\fno_{g,h}(m,n)$
is to proceed by induction, employing the change of coordinates
principle mentioned above. This does indeed lead to finite bounds,
but the various induction schemes we have tried always led to 
bounds at least exponential in one of $m,n$. Independently of 
our work, Geelen, Huynh, and Richter 
\cite{Geelen:Explicit-bounds-for-graph-minors-2013} also recently proved bounds of this kind; see
the discussion below. Partially influenced by the results on exponentially many
intersections in representations of string graphs and similar
objects (see \cite{km-igs-94,string-np}), we first suspected that
an exponential behavior might be unavoidable. Then, however,
we found, using a very different approach, that polynomial
bounds actually do hold.


For planar $\surf$,
i.e., $g=0$, we obtain an asymptotically tight bound:

\begin{theorem}
\label{t:bound_planar}
For planar $\surf$, we have
$
f_{0,h}(m,n) = O(mn),
$
independent of $h$.
\end{theorem}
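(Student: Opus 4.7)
The plan is to exhibit a $\partial$-automorphism $\varphi$ for which every pair $(\alpha_i, \varphi(\beta_j))$ crosses in only $O(1)$ points, yielding the total bound $O(mn)$. The argument has two main stages: a per-pair bound, and a simultaneous realization.

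For the per-pair bound, I would use the fact that on a planar surface every simple closed curve $\beta$ is Jordan-like and, under the action of $\partial$-automorphisms, can be moved to any other simple closed curve with the same partition $(B_+, B_-)$ of the holes (and similarly for arcs, with the additional datum of endpoints). Given $\alpha$ with partition $(A_+, A_-)$, a closed curve $\gamma$ with partition $(B_+, B_-)$ crossing $\alpha$ in at most two points can be constructed directly: take a Jordan loop enclosing $B_+ \cap A_+$ inside the $A_+$-region of $\alpha$ and a Jordan loop enclosing $B_+ \cap A_-$ inside the $A_-$-region, and splice them into a single Jordan curve by two arcs across $\alpha$. An analogous construction handles arcs, giving at most one crossing per pair of arcs, and similarly for mixed arc/closed-curve pairs.

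To coordinate these per-pair redrawings into a single $\partial$-automorphism, so that the images $\varphi(\beta_1), \ldots, \varphi(\beta_n)$ remain pairwise almost-disjoint, I would invoke the Erten--Kobourov theorem on simultaneous embedding of planar graphs. Viewing $A$ and $B$ as planar graphs on a common vertex set $V \subseteq \partial\surf$ of arc-endpoints (augmented by a marker vertex on each closed curve), the theorem provides a simultaneous drawing in which each of $A$ and $B$ is planar on its own and each edge is a polyline with $O(1)$ bends; two such polyline edges cross in only $O(1)$ points, giving at most $O(mn)$ crossings between $A$ and $B$ in total. The change of coordinates principle for planar surfaces then realizes this drawing as $\varphi(B)$ for a suitable $\partial$-automorphism $\varphi$.

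The main obstacle I expect is reconciling Erten--Kobourov (a statement about abstract planar graphs in the plane) with the rigid hole structure of $\surf$: since $\varphi$ fixes each hole pointwise, each curve's hole-partition is prescribed and must be respected by the drawing. I would address this by first cutting $\surf$ along a tree of arcs joining the holes to the outer boundary to reduce to a disk, applying the simultaneous embedding there, and then re-gluing; the additional crossings introduced in this reduction are bounded per curve of $B$ and are absorbed into the $O(mn)$ estimate.
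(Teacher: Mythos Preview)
Your identification of Erten--Kobourov as the engine is correct and matches the paper, and you are right that the real difficulty is reconciling a simultaneous embedding in $S^2$ with the hole structure that $\varphi$ must fix. However, the remedy you propose --- cutting $\surf$ along a spanning tree of arcs down to a disk and invoking the theorem there --- does not give a bound independent of $h$. Such a tree has roughly $h$ arcs, and a single $\beta_j$ (or $\alpha_i$) can be forced to cross that tree in arbitrarily many points; after cutting you no longer have $O(m+n)$ pieces but potentially $\Omega(m+n+h)$ or worse. Your claim that ``the additional crossings introduced in this reduction are bounded per curve of $B$'' is exactly what fails: a closed $\beta_j$ that separates the holes into two large groups must be crossed by every tree edge connecting the two groups. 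The per-pair discussion in your first paragraph is fine but does not contribute to the simultaneous argument and can be dropped.

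The paper avoids this $h$-dependence by a different mechanism. Holes whose boundaries meet some curve are \emph{subdivided} and their boundary circles are included as ``hole edges'' in both $G_1$ and $G_2$, so the simultaneous embedding carries them along automatically (after a contraction/decontraction trick). Holes that meet no curve are \emph{inner holes}; the key observation is that an inner hole matters only through its \emph{signature} $(f_1,f_2)$, the pair of faces of $D_{G_1}$ and $D_{G_2}$ containing it, and there are only $O(mn)$ signatures regardless of $h$. The strengthened version of Erten--Kobourov (every $e_1$ meets every $e_2$ at least once) guarantees that for each signature the corresponding faces in the new drawing overlap, so all inner holes of that signature can be placed there. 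The passage from the simultaneous drawing back to a $\partial$-automorphism is then built as a composition $\varphi=\varphi_1^{-1}\varphi_2\varphi_1$ of sphere automorphisms constructed via the uniqueness of triangulation drawings; this step also requires controlling whether the Erten--Kobourov drawings are directly or mirror-equivalent to the originals, a point your sketch does not address.
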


Here and in the sequel, the constants implicit in the $O$-notation are
absolute, independent of $g$ and~$h$.

A simple example providing a lower bound of $2mn$ is obtained, e.g.,
by replicating $\alpha_2$ in Fig.~\ref{f:intro-ex} $m$ times
and $\beta_1$ $n$ times. \ifGD\else We currently have no example forcing
more than $2mn$ intersections.
\medskip \fi

In general, we obtain the following bounds:
\begin{theorem}
\label{t:bound_main}
\begin{enumerate} 
\item[\textup{(i)}] For the orientable case,
\ifGD$\else\[\fi
f_{g,h}(m,n) = O((m+n)^4).
\ifGD$\else\]\fi
\item[\textup{(ii)}]
For the nonorientable case,
\ifGD$\else\[\fi
\fno_{g,h}(m,n) = O((m+n)^4).
\ifGD$\else\]\fi
\end{enumerate}
\end{theorem}

%
%
%

Both parts of Theorems~\ref{t:bound_main} are derived  from the planar case, Theorem~\ref{t:bound_planar}.
In the orientable case, we use the following results
on  genus reduction. For a convenient notation, let us
set $\maxmn=\max(m,n)$.

\begin{proposition}[Orientable genus reductions]
\label{p:bound_g}
\vspace{-2mm}
\begin{enumerate}
\item[\rm (i)] For $g>\maxmn$, we have 
\ifGD$\else\[\fi
f_{g,h}(m,n) \leq f_{\maxmn, g+h -\maxmn}(m,n).
\ifGD$\else\]\fi
\item[\rm (ii)]
$f_{g,h}(m,n) \leq f_{0,h + 1}(cg(m+g),cg(n+g))$ for a suitable
 constant $c > 0$.
\end{enumerate}
\end{proposition}

\ifGD Theorem~\ref{t:bound_main}~(i), the orientable case,
follows immediately from Proposition~\ref{p:bound_g}
and the planar bound.
\else
To derive Theorem~\ref{t:bound_main}~(i),
for $g>\maxmn$, we use Proposition~\ref{p:bound_g}(i), then~(ii),
and then the planar bound:
$  f_{g,h}(m,n) \leq f_{\maxmn, g + h - \maxmn}(m,n) 
 \leq f_{0, g + h + 1 - \maxmn}(2c\maxmn^2, 2c\maxmn^2)=O(\maxmn^4)$.
For $g\le \maxmn$, we omit the first step.
\medskip
\fi

In the nonorientable case, Theorem~\ref{t:bound_main}~(ii)
 is derived in two steps. First, analogous to
 Proposition~\ref{p:bound_g}~(i), we have the following reduction:
\begin{proposition}[Nonorientable genus reduction]
  \label{p:bound_g_nonorientable}
 For $g > 4\maxmn + 2$, we have
\ifGD$\else\[\fi
\fno_{g,h}(m,n) \leq \fno_{g',h'}(m,n),
\ifGD$\else\]\fi
where $g'=4\maxmn + 2 - (g \smod 2)$ and $h'=h + \lceil g/2
\rceil - 2\maxmn - 1$. 
\end{proposition}

The second step is a reduction to the orientable case.
\begin{proposition}[Orientability reduction]
  \label{p:or_red}
There is a constant $c$ such that
\ifGD$\else\[\fi
\hat f_{g,h}(m,n) \leq f_{\lfloor (g-1)/2 \rfloor,h+1 + (g \smod 2)} (c(g+m),
c(g+n)).
\ifGD$\else\]\fi
\end{proposition}
\ifGD\else
Now we can derive Theorem~\ref{t:bound_main}~(ii). 
We set $\maxmn := \max(m,n)$.
For $g> 4\maxmn + 2$, we use Proposition~\ref{p:bound_g_nonorientable}, then
Proposition~\ref{p:or_red}. We also use
monotonicity of the entanglement numbers in $m$ and $n$.
We obtain
$ \hat f_{g,h}(m,n) \leq  \hat f_{4\maxmn + 2 - (g \smod 2), \vartheta_1(g,h,m,n)}(m,n) \leq 
f_{2\maxmn,\vartheta_2(g,h,m,n)}(6c\maxmn, 6c\maxmn)$
where $\vartheta_1$ and $\vartheta_2$ are functions that, for simplicity, we do
not evaluate explicitly.
Then we use Proposition~\ref{p:bound_g} and the planar bound,
Theorem~\ref{t:bound_planar}, to obtain an $O(\maxmn^4)$ bound similarly as in the
orientable case.
For $g\le 4\maxmn + 2$, we omit the first step.
\fi
Table~\ref{f:suma} summarizes the proof of Theorem~\ref{t:bound_main}.
\medskip

\begin{table}
\framedpar{\begin{enumerate}
\item For a planar surface, temporarily remove the holes not incident
to any $\alpha_i$ or $\beta_j$, and contract the remaining ``active''
holes, augment the resulting planar graphs to make them 3-connected.
Make a simultaneous plane
drawing of the resulting planar graphs $G_1$ and $G_2$
with every edge of $G_1$ intersecting every edge of $G_2$
at most $O(1)$ times. Decontract the active holes and put the
remaining holes back into appropriate faces
 (Theorem~\ref{t:bound_planar}; 
Section~\ref{s:planar}).
\item If the genus is larger than $c(m+n)$, find handles or cross-caps
avoided by the $\alpha_i$ and $\beta_j$, 
temporarily remove them, untangle the $\alpha_i$ and $\beta_j$,
and put the handles or cross-caps back (Propositions~\ref{p:bound_g}~(i)
and~\ref{p:bound_g_nonorientable}; 
\ifGD the proofs are omitted from this extended abstract
\else 
Section~\ref{s:gen_O(m+n)}\fi).
\item If the surface is nonorientable, make it orientable by cutting
along a suitable curve that intersects the $\alpha_i$ and $\beta_j$
 at most $O(m+n)$ times,
untangle the resulting pieces
of the $\alpha_i$ and $\beta_j$, and glue back (Proposition~\ref{p:or_red}%
\ifGD
\else
; Section~\ref{s:non_or}\fi).
\item Make the surface planar by cutting along a suitable system
of curves (canonical system of loops), untangle
the resulting pieces 
of the $\alpha_i$ and $\beta_j$, and glue back 
(Proposition~\ref{p:bound_g}~(ii)%
 \ifGD
\else; Section~\ref{s:redu2}\fi).
\end{enumerate}}
\caption{A summary of the proof.}
\label{f:suma}
\end{table}

\heading{Motivation.} We were led to the question concerning untangling curves on
surfaces while working on a project on $3$-manifolds and embeddings. Specifically, 
we are interested in an algorithm for the following problem: given a $3$-manifold $M$ 
with boundary, does $M$ embed in the $3$-sphere? A special case of this problem, 
with the boundary of $M$ a torus, was solved in \cite{JacoSedgw}.
The general version of the problem is motivated, in turn, by the question of algorithmically
testing the embeddability of a $2$-dimensional simplicial complex
in $\R^3$; see \cite{MatousekTancerWagner:HardnessEmbeddings-2011}.

Very recently, we showed that these embeddability problems are algorithmically decidable, see
\cite{emb3}. For the proof, we use the following upper bound on $f_{g,h}(m,n)$ and $\hat f_{g,h}(m,n)$,
which we state here as a separate corollary in the specific form used in  \cite{emb3}, for convenience
of reference.

\begin{corollary} Both  $f_{g,h}(m,n)$ and $\fno_{g,h}(m,n)$ are bounded from above by 
$K(g)mn$, where $K(g)$ is a computable function of $g$, independent of $h$ (in fact, $K(g)=O(g^4)$).
\end{corollary}

\begin{proof}
By Thm.~\ref{t:bound_planar}, for planar $\surf$, we have $f_{0,h}=O(mn)$. 
By Prop.~\ref{p:bound_g}~(ii), in case of an orientable surface of arbitrary genus, 
$f_{g,h}(m,n)\le f_{0,h+1}(cg(m+g),cg(n+g))=
O(g^2(m+g)(n+g))=O(g^4mn)$. For the nonorientable
case, Prop.~\ref{p:or_red} gives $\fno_{g,h}(m,n)\le f_{\lfloor (g-1)/2\rfloor,h+1+(g \mod 2)}(c(m+g),c(n+g))=
O(g^4 mn)$ as well.
\end{proof}

Independently of the application to embeddability, we consider
the problem investigated in this paper interesting in itself
and contributing to a better understanding of combinatorial properties
of curves on surfaces.

As mentioned above, the question studied in the present paper has also been 
investigated independently by Geelen, Huynh, and Richter \cite{Geelen:Explicit-bounds-for-graph-minors-2013}, with a rather different and
very strong motivation stemming from the theory of graph minors,
namely the question of obtaining explicit upper bounds for the graph minor algorithms of Robertson and Seymour. 
Phrased in the language of the present paper, 
Geelen et al.~\cite[Theorem~2.1]{Geelen:Explicit-bounds-for-graph-minors-2013} show that $f_{g,h}(m,n)$ and $\fno_{g,h}(m,n)$ are both bounded by $n 3^m$,
but only under the assumption that 
$\surf\setminus(\beta_1\cup\cdots\cup\beta_n)$ is connected.\footnote{We remark that without this additional assumption, 
the bounds proved by Geelen et al. (or even weaker ones of the form $K(g,m)n$) could also be used for the application to the algorithmic embeddability problem, but due to the extra assumption their results cannot be directly applied to \cite{emb3} (even though it might
be possible to remove the extra assumption).} 

\ifGD\else 
\heading{Further work. } 
We suspect that the bound in Theorem~\ref{t:bound_main} should also be $O(mn)$.
The possible weak point of the current proof is the reduction
in Proposition~\ref{p:bound_g}(ii), from genus comparable to $m+n$ to
the planar case. 

This reduction uses a result of the following kind:
given a graph $G$ with $n$ edges embedded on a compact $2$-manifold
$\surf$ of genus $g$ (without boundary), one can construct a system of curves
on $\surf$ such that cutting $\surf$ along these curves yields one or
several planar surfaces, and at the same time, the curves have a bounded
number of crossings with the edges of $G$ (see Section~\ref{s:redu2}).
Concretely, we use a result of Lazarus et al.~\cite{Lazarus-al}, where
the system of curves  is of a special kind, forming a \emph{canonical
system of loops}. (This result is in fact essentially due to Vegter and
Yap~\cite{VegterYap}; however, the formulation in~\cite{Lazarus-al} is more
convenient for our purposes.) Their result is asymptotically optimal
for a canonical system of loops, but it may be possible to improve
it for other systems of curves. This and similar questions have
been studied in the literature, mostly in algorithmic context
 (see, e.g., \cite{CabelloMohar,Demaine-al,CdVthesis,CdVHabilitation} 
for some of the
relevant works), but we haven't found any existing result superior to 
that of Lazarus et al.\ for our purposes.
\fi

\section{Planar Surfaces}
\label{s:planar}

In this section  we prove Theorem~\ref{t:bound_planar}.
In the proof we use the following basic fact (see, e.g.,~\cite{MoharThomassen}).

\begin{lemma}\label{l:drawings-same}
If $G$ is a maximal planar simple graph
(a triangulation), then for every two planar drawings of $G$ in $S^2$
there is an automorphism $\psi$ of $S^2$ converting one of the drawings into
the other (and preserving the labeling of the vertices and edges).
Moreover, if an edge $e$ is drawn by the same arc in both of the
drawings, w.l.o.g.\ we may assume that $\psi$ fixes this arc pointwise.
\end{lemma}

Let us introduce the following piece of terminology.
Let $G$ be as in the lemma, and let $D_G$, $D'_G$ be two planar drawings
of $G$. We say that $D_G,D'_G$ are \emph{directly equivalent}
if there is an orientation-preserving automorphism of $S^2$
mapping $D_G$ to $D'_G$, and we call $D_G,D'_G$  \emph{mirror-equivalent}
if there is an orientation-reversing automorphism of $S^2$ converting
$D_G$ into $D'_G$.

We will also rely on a result concerning simultaneous planar embeddings;
see \cite{sim-surv}.
Let $V$ be a vertex set and let $G_1=(V,E_1)$ and $G_2=(V,E_2)$ be two planar
graphs on~$V$. A planar drawing $D_{G_1}$ of $G_1$ and a planar drawing
$D_{G_2}$ of $G_2$ are said to form a \emph{simultaneous embedding}
of $G_1$ and $G_2$ if each vertex $v\in V$ is represented by the
same point in the plane in both $D_{G_1}$ and $D_{G_2}$ (in particular any edge
drawn in $D_{G_1}$ may intersect any edge drawn in $D_{G_2}$).

We note that $G_1$ and $G_2$ may have common edges, but they are
not required to be drawn in the same way in $D_{G_1}$ and in $D_{G_2}$.
If this requirement is added, one speaks of a \emph{simultaneous
embedding with fixed edges}. There are pairs of planar graphs
known that do not admit any simultaneous embedding with
fixed edges (and consequently, no simultaneous straight-line
embedding). An important step in our approach is very similar to the proof of 
the following result.

\begin{theorem}[Erten and Kobourov \cite{EK}]\label{t:EK}
Every two planar graphs $G_1=(V,E_1)$ and $G_2=(V,E_2)$
admit a simultaneous embedding in which every edge
is drawn as a polygonal line with at most $3$ bends.
\end{theorem}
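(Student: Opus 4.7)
The plan is to follow the Erten--Kobourov construction: fix a common placement of the vertices of $V$ in the plane, then independently route the edges of each $G_i$ as polygonal lines with at most three bends, using a canonical ordering of a planar triangulation containing $G_i$ to control crossings.

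First I would reduce to the case of triangulations. Augment each $G_i$ to a maximal planar graph $G_i^*$ on the same vertex set $V$ (by adding auxiliary edges that can be drawn without crossing). Any simultaneous embedding of $G_1^*$ and $G_2^*$ with at most three bends per edge restricts, upon deleting the auxiliary edges, to a simultaneous embedding of $G_1$ and $G_2$ with the same bound. Fix combinatorial planar embeddings of $G_1^*$ and $G_2^*$, together with canonical orderings $\sigma_1$ and $\sigma_2$ of their vertices (in the sense of de~Fraysseix--Pach--Pollack, or Kant).

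Next I would choose the common vertex placement. Place the vertices of $V$ at prescribed positions along a convex curve (for instance, along a horizontal segment, with two ``sides'' available for routing). The $x$-coordinates are chosen to respect $\sigma_1$. With this placement the edges of $G_1^*$ can be drawn above the curve as $y$-monotone polylines with at most one bend each, following the layered structure of the canonical ordering; after deleting auxiliary edges this yields a planar drawing of $G_1$ with at most one bend per edge. For $G_2$, the placement is, in general, entirely unrelated to $\sigma_2$, so some bends will be needed. For each edge $uv \in E_2$, I route it below the curve as a polyline with three bends: one bend near $u$ and one near $v$, used to leave the endpoint in the cyclic order dictated by the planar embedding of $G_2^*$ around that vertex, together with one intermediate bend used to travel to a ``channel'' assigned to the edge. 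The channel assignment is guided by $\sigma_2$, so that edges added later in the canonical ordering are routed in channels farther from the curve.

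The main obstacle is to prove that this routing scheme for $G_2$ is crossing-free and that three bends genuinely suffice. The key combinatorial input is the property of the canonical ordering that each new vertex is joined to a consecutive interval of previously placed vertices on the outer boundary; translating this to the geometric setup, the edges of $G_2^*$ can be placed in non-conflicting channels below the curve, and the two bends near the endpoints absorb all discrepancies between the arbitrary $x$-order of $V$ and the angular order required by the embedding of $G_2^*$. Verifying these claims in detail, and showing that three bends are enough rather than four, is the main technical content.
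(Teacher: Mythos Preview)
Your approach diverges substantially from the Erten--Kobourov construction that the paper reproduces (in its proof of Theorem~\ref{t:spiked}), and it contains a genuine gap.

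The paper's construction does not use canonical orderings at all. It makes each $G_i$ Hamiltonian (by triangulating and then subdividing certain edges once to achieve $4$-connectivity), fixes Hamiltonian cycles $H_1,H_2$, and places vertex $v_i$ at the \emph{two-dimensional} grid point $(i,\pi^{-1}(i))$, where $i$ is its position along $H_1$ and $\pi^{-1}(i)$ its position along $H_2$. The edges of $G_1$ are then drawn as $x$-monotone ``bispiked'' two-bend polylines, and those of $G_2$ as $y$-monotone bispiked polylines. The crucial point is that the placement is simultaneously compatible with \emph{both} Hamiltonian orders, one per coordinate axis; neither graph is privileged, and each gets a coordinate in which its Hamiltonian path is monotone.

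Your scheme instead places all vertices on a one-dimensional curve in an order dictated solely by $G_1$, draws $G_1$ above and $G_2$ below. The claim that the triangulation $G_1^*$ can be drawn entirely above the curve with one bend per edge is false in general: a $y$-monotone one-bend polyline joining two points on a horizontal line is topologically just an arc above that line, so what you are asserting is a one-page book embedding of $G_1^*$. A planar triangulation on four or more vertices contains $K_4$ and hence is not outerplanar, so no such embedding exists. Canonical orderings yield straight-line drawings on a two-dimensional grid; they do not produce one-page (or even two-page) book layouts, and the ``layered structure'' you invoke does not translate into the property you need. The channel scheme for $G_2$ has the same defect: the canonical-ordering guarantee that each new vertex is adjacent to a contiguous interval of the current outer face refers to contiguity in the drawing determined by $\sigma_2$, not in the unrelated linear order imposed by $\sigma_1$, so there is no reason the channels can be made non-conflicting with a bounded number of bends. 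The missing idea is precisely the two-dimensional placement coming from the pair of Hamiltonian cycles.
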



We will need the following result, which follows easily from the proof
given in~\cite{EK}. For the reader's convenience,
instead of just pointing out the necessary modifications,
we present a full proof.

\begin{theorem}
\label{t:spiked}
Every two planar graphs $G_1=(V,E_1)$ and $G_2=(V,E_2)$
admit a simultaneous, piecewise linear embedding
in which each edge of $G_1$ and each edge of $G_2$ intersect 
at least once and at most
$C$ times, for a suitable constant $C$.\footnote{An obvious bound from the
proof is $C\le 36$, since 
every edge in this embedding is drawn using at most $5$
bends. By a more careful inspection, one can easily
get $C\le 25$, and a further improvement is probably possible.} 

In addition, if both $G_1$ and $G_2$ are
maximal planar graphs, let us fix a planar drawing $D'_{G_1}$
of $G_1$ and a planar drawing $D'_{G_2}$ of $G_2$.
 The planar drawing of $G_1$ in the
simultaneous embedding can be required to be either
directly equivalent to $D'_{G_1}$, or mirror-equivalent to it,
and similarly for the drawing of $G_2$ (each of the four combinations
can be prescribed). 
\end{theorem}

\begin{proof}
 For the beginning, we assume that both graphs are Hamiltonian. Later on, we will drop this assumption. 

Let $v_1, v_2, \dots, v_n$ be the order of the vertices as they appear on
(some) Hamiltonian cycle $H_1$ of $G_1$. Since the vertex set $V$ is common for $G_1$
and $G_2$, there is a permutation $\pi \in S(n)$ such that $v_{\pi(1)}, \dots,
v_{\pi(n)}$ is the order of the vertices as they appear on some Hamiltonian
cycle $H_2$ of $G_2$.

We draw the vertex $v_i$ in the grid point $p_i = (i, \pi(i))$, $i=1,2,\ldots,n$.
 Let $S$ be the square $[1,n]\times [1, n]$. A \emph{bispiked} curve is
an 
$x$-monotone polygonal curve with two bends such that it starts inside $S$; the
first bend is above $S$, the second bend is below $S$ and it finishes in $S$
again.

The $n-1$ edges $v_iv_{i + 1}$, of $H_1$, $i=1,2,\ldots,n-1$, are drawn as
bispiked curves starting in $p_i$ and finishing in $p_{i+1}$. In order to
distinguish edges and their drawings, we denote these bispiked curves by
$c(i,i+1)$. 

Similarly, we draw the
edges $v_{\pi(i)}v_{\pi(i+1)}$ of $H_2$, $i=1,2,\ldots,n-1$,
as $y$-monotone analogs 
of bispiked curves, where the first bend is on the left of $S$ 
and the second is on the right of $S$; here is an example:
\immfigw{bispiked_with_H}{9cm}


We continue only with description of how to draw $G_1$; $G_2$ is
drawn analogously with the grid rotated by $90$ degrees. 


Let $D'_{G_1}$ be a planar drawing of $G_1$. Every edge from $E_1$ that 
is not contained in $H_1$ is drawn either inside $D'_{H_1}$ or outside. 
Thus, we
split $E_1 \setminus E(H_1)$ into two sets $E'_1$ and $E''_1$.

Let $P_0$ be the polygonal path obtained by concatenation of the
curves $c(1, 2)$,
$c(2,3),\dots$, $c(n-1, n)$. 
Now our task is to draw the edges of $E'_1 \cup \{v_1v_n\}$ 
as bispiked curves, all
above $P_0$, and then the edges of $E''_1$ below~$P_0$.

We start with $E'_1$ and we draw edges from it one by one,
in a suitably chosen
order, while keeping the following properties. 

\begin{enumerate}
\item[(P1)] Every edge $v_iv_j$, where $i < j$, is drawn as a bispiked curve $c(i,j)$ 
starting in $p_i$ and ending in $p_j$. 
\item[(P2)]
 The $x$-coordinate of the second bend of $c(i,j)$ belongs to the interval
$[j-1,j]$.
\item[(P3)]
 The polygonal curve $P_k$ that we see from above after drawing the $k$th edge is
obtained as a concatenation of some curves $c(1, i_1), c(i_1, i_2), \dots,
c(i_\ell, n)$.
\end{enumerate}
Here is an illustration; the square $S$ is deformed for the
purposes of the drawing:
\immfigw{new_edge}{5cm}


Initially, before drawing the first edge,
 the properties are obviously satisfied. 

Let us assume that we have already drawn $k-1$ edges of $E'_1$,
 and let us focus on drawing the $k$th edge. 
Let $e = v_iv_j \in E'_1$ be an edge that is not
yet drawn and such that all edges below $e$ are already drawn,
where ``below $e$'' means 
all edges $v_{i'}v_{j'} \in E'_1$ with $i \leq i' < j' \leq j$, 
$(i,j) \neq (i',j')$.
(This choice ensures that we will draw all edges of $E'_1$.) 

Since $D'_{G_1}$ is a planar drawing, we know that there is no edge
$v_{i'}v_{j'} \in E'_1$ with $i < i' < j < j'$ or $i' < i < j' < j$, and so
the points $p_i$ and $p_j$ have to belong to $P_{k-1}$. The subpath $P'$ of
$P_{k-1}$ between $p_i$ and $p_j$ is the concatenation of curves $c(i,
\alpha_1), c(\alpha_1, \alpha_2), \dots, c(\alpha_s, j)$ as
in the inductive assumptions.
In particular, the $x$-coordinate of the second bend $b^*$ of $c(\alpha_s,j)$
belongs to the interval $[j-1,j]$. We draw $c(i,j)$ as follows:
The second bend of $c(i,j)$ is slightly above $b^*$ but still below the square
$S$. The first bend of $S$ is sufficiently high above $S$ 
(with the $x$-coordinate somewhere between $i$ and $j-1$)
 so that the resulting bispiked curve $c(i,j)$ does not intersect $P_{k-1}$. 
The properties (P1) and (P2) are
obviously satisfied by the construction. For (P3), the path $P_k$
is obtained from $P_{k-1}$ by replacing $P'$ with $c(i,j)$.

After drawing the edges of $E'_1$, we 
draw $v_1v_n$ in the same way. Then we draw
the edges of $E''_1$ in a similar manner as those of $E'_1$,
this time as bispiked curves below $P_0$. This finishes
the construction for Hamiltonian graphs. 

Now we describe how to adjust this
construction for non-Hamiltonian graphs, in the spirit of~\cite{EK}.

First we add edges to $G_1$ and $G_2$ so that they become
 planar triangulations. 
This step does not affect the construction at all, except that
we remove these edges in the final drawing.

Next, we subdivide some of the edges of $G_i$ with \emph{dummy}
vertices. Moreover, we attach two new \emph{extra} edges to each dummy vertex,
as in the following illustration:
\immfigw{dummy}{4cm}
By choosing the subdivided edges suitably, 
one can obtain a $4$-connected, and thus Hamiltonian, graph; see
\cite[Proof of Theorem 2]{EK} for details (this idea previously comes
from~\cite{KaufmannWiese:RemovingHamiltonicity}). An
important property of this construction is that each edge of $G_i$ is
subdivided at most once. 

In this way, we obtain new Hamiltonian graphs $G'_1$ and $G'_2$, 
for which we want to construct a simultaneous drawing as  
in the first part of the proof.
A little catch is that $G'_1$ and $G'_2$ do not have same vertex sets,
but this is easy to fix.
 Let $d_i$ be the number of dummy vertices of $G'_i$, $i=1,2$,
and say that $d_1\ge d_2$.
We pair the $d_2$ dummy vertices of $G'_2$ with some of the
dummy vertices of $G'_1$. Then we
iteratively add $d_1 - d_2$ new triangles to $G'_2$,
 attaching each of them to an edge of
a Hamiltonian cycle. This operation keeps Hamiltonicity and
introduces $d_1 - d_2$ new vertices, which can be matched with
the remaining $d_1-d_2$ dummy vertices in~$G'_1$.

After drawing resulting graphs, we remove all extra dummy vertices and extra
edges added while introducing dummy vertices. An original edge $e$ that was
subdivided by a dummy vertex is now drawn as a concatenation 
of two bispiked curves. Therefore,
each edge is drawn with at most $5$ bends.

Two edges with 5 bends each may in general have at most 36 intersections,
but in our case, there can be at most 25 intersections, since the union
of the two segments before and after a dummy vertex is both $x$-monotone
and $y$-monotone.

Because of the bispiked drawing of all edges, it is also clear that every edge
of $G_1$ crosses every edge of $G_2$ at least once.


Finally, the requirements on directly equivalent or mirror-equivalent 
drawings can easily be fulfilled by interchanging the role
of top and bottom in the drawing of $G_1$ or left and right
in the drawing of $G_2$. Theorem~\ref{t:spiked} is proved.
\end{proof}

\begin{proof}[Proof of Theorem~\ref{t:bound_planar}.]
Let a planar surface $\surf$ and the curves $\alpha_1,\ldots,\alpha_m$,
$\beta_1,\ldots,\beta_n$ be given; we assume that $\surf$
is a subset of $S^2$. Furthermore; by eventually applying some
$\partial$-automorphism moving the curves $\beta_j$, we can assume that for
every $i$ and $j$ the curves $\alpha_i$ and $\beta_j$ meet on the boundary in
the endpoints or in the interior transversally and in
a finite number of points. From this we construct
a set $V$ of $O(m+n)$ vertices in $S^2$ and planar drawings $D_{G_1}$
and $D_{G_2}$ of two simple graphs $G_1=(V,E_1)$ 
and ${G_2}=(V,E_2)$ in $S^2$, as follows.
\begin{enumerate}
\item We put all endpoints of the $\alpha_i$ and of the $\beta_j$
into~$V$ (note that some of them can be shared).
\item We choose a new vertex in the interior of each  $\alpha_i$
and each $\beta_j$, or two distinct vertices if $\alpha_i$ or $\beta_j$
is a loop with a single endpoint, or three vertices if $\alpha_i$ or $\beta_j$
is a closed curve, and we add all of these vertices to~$V$.
These new vertices are all distinct and do not lie on any
curves other than where they were placed.
\item If the boundary of a hole in $\surf$ already 
contains a vertex introduced so far, we add more vertices 
so that it contains at least $3$ vertices of~$V$.
This finishes the construction of~$V$.
\item To define the edge set $E_1=E(G_1)$ and the planar drawing $D_{G_1}$,
we take the portions of the curves $\alpha_1,\ldots,\alpha_m$
between consecutive vertices of $V$ as edges of~$E_1$.
Similarly, we make the arcs of the boundaries of the holes
into edges in $E_1$; these will be called the \emph{hole edges}.
By the choice of the vertex set $V$ above, this yields
a simple plane graph.
\item
Then we add new edges to $E_1$
so that we obtain a drawing $D_{G_1}$ in $S^2$ of a maximal 
planar simple graph $G_1$ (i.e., a triangulation)
on the vertex set~$V$. While choosing these edges, we make sure
that all holes containing no vertices of $G$ lie in faces of
$D_{G_1}$ adjacent to some of the $\alpha_i$.
New edges drawn in the interior
of a hole are also called \emph{hole edges}.
\item We construct ${G_2}=(V,E_2)$ and $D_{G_2}$ analogously, using the curves
$\beta_1,\ldots,\beta_n$.  We make sure that all hole edges are common to
$G_1$ and~${G_2}$.
\end{enumerate}

After this construction, each hole of $\surf$ contains either no vertex of $V$ on
its boundary or at least three vertices. In the former case, 
we speak of an
\emph{inner} hole, and in the latter case, of a \emph{subdivided hole}. 
A face $f$ of $D_{G_1}$ or $D_{G_2}$ is a \emph{non-hole face} if it is not contained
in a subdivided hole.
An inner hole $H$ has its \emph{signature}, which is a pair
$(f_1, f_2)$, where $f_1$ is the unique non-hole face of $D_{G_1}$ containing
$H$, and
$f_{2}$ is the unique non-hole face of $D_{G_2}$ containing
$H$.\footnote{Classifying inner holes according to the signature helps us
to obtain a bound independent on the number of holes. Inner holes with same
signature are all treated in the same way, independent of their number.} 
By the construction, each $f_1$ appearing in a signature is adjacent
to some $\alpha_i$, and each $f_2$ is adjacent to some~$\beta_j$.

In the following claim, we will consider different drawings $D'_{G_1}$ and $D'_{G_2}$
for $G_1$ and ${G_2}$.  By Lemma~\ref{l:drawings-same}, the
faces of $D_{G_1}$ are in one-to-one
correspondence with the faces of $D'_{G_1}$. 
For a face $f_1$ of $D_{G_1}$, we denote the
corresponding face by $f'_1$, and similarly for a face $f_2$
of $D_{G_2}$ and~$f'_2$.

\ifGD
\begin{numclaim}
\else
\begin{claim}
\fi
\label{c:same-holes}
The graphs $G_1$ and ${G_2}$ as above have planar drawings
$D'_{G_1}$ and $D'_{G_2}$, respectively, that form a simultaneous embedding
in which each edge of $G_1$ crosses each edge of $G_2$ at most $C$ times,
for a suitable constant $C$;
moreover, $D'_{G_1}$ is directly equivalent to $D_{G_1}$; $D'_{G_2}$ is directly equivalent
to $D_{G_2}$; all hole edges are
drawn in the same way in $D'_{G_1}$ and~$D'_{G_2}$; and
whenever $(f_1,f_2)$ is a signature of an inner hole,
the interior of the intersection $f'_1 \cap f'_2$ is nonempty.
\ifGD
\end{numclaim}
\else
\end{claim}
\fi

We postpone the proof of Claim~\ref{c:same-holes}, and we first
finish the proof of Theorem~\ref{t:bound_planar} assuming this claim.

For each inner hole $H$ with signature $(f_1, f_2)$, we introduce a closed
disk $B_H$ in the interior of $f'_1 \cap f'_2$. We require that
these disks are pairwise disjoint. In sequel, we consider holes as subsets of
$S^2$ homeomorphic to closed disks (in particular, a hole $H$ 
intersects $\surf$ in~$\partial H$). 


\ifGD
\begin{numclaim}
\else
\begin{claim}
\fi
\label{c:tg}
There is an orientation-preserving automorphism $\varphi_1$ of $S^2$
transforming every inner hole $H$ to $B_H$ and $D_{G_1}$ to
$D'_{G_1}$. 
\ifGD
\end{numclaim}
\else
\end{claim}
\fi

\begin{proof}
Using Lemma~\ref{l:drawings-same} again, there is an orientation-preserving
 automorphism $\psi_1$ transforming $D_{G_1}$ into $D'_{G_1}$ 
(since $D_{G_1}$ and $D'_{G_1}$ are directly equivalent).

Let $f_1$ be a face of $D_{G_1}$. The interior of $f'_1$ contains images
$\psi_1(H)$ of all holes $H$ with signature $(f_1, \cdot)$,
and it also contains the disks $B_{H}$ for these holes. 
Therefore, there is a
boundary- and orientation-preserving automorphism of $f'_1$ that maps 
each $\psi_1(H)$ to~$B_{H}$.

By composing these automorphisms on every $f'_1$ separately, we have an
orientation-preserving automorphism $\psi_2$ fixing $D'_{G_1}$ and transforming
each $\psi_1(H)$ to $B_{H}$. 
The required automorphism is $\varphi_1 = \psi_2 \psi_1$.
\end{proof}

\ifGD
\begin{numclaim}
\else
\begin{claim}
\fi
\label{c:th}
There is an orientation-preserving automorphism $\varphi_2$ of $S^2$
that fixes hole edges (of subdivided holes), fixes $B_H$
for every inner hole $H$, and transforms $\varphi_1(D_{G_2})$ to $D'_{G_2}$.
\ifGD
\end{numclaim}
\else
\end{claim}
\fi

\begin{proof}
By Lemma~\ref{l:drawings-same} there is an orientation-preserving automorphism
$\psi_3$ of $S^2$ that fixes hole edges and transforms $\varphi_1(D_{G_2})$ to
$D'_{G_2}$. 

If an inner hole $H$ has a signature $(\cdot, f_2)$, then
both $\psi_3(B_H)$ and $B_{H}$ belong to the interior of $f'_2$.
Therefore,  as in the proof of the previous claim, there is an
orientation-preserving homeomorphism $\psi_4$ that 
fixes $D'_{G_2}$ and transforms
$\psi_3(B_H)$ to $B_{H}$. We can even require that $\psi_4 
\psi_3$ is identical on $B_{H}$. We set 
$\varphi_2 := \psi_4  \psi_3$.
\end{proof}

To finish the proof of Theorem~\ref{t:bound_planar},
we set $\varphi = \varphi_1^{-1}  \varphi_2 
\varphi_1$. We need that $\varphi$ fixes the holes (inner or subdivided) and
that $\alpha_1, \dots, \alpha_m$ and $\varphi(\beta_1), \dots,
\varphi_1(\beta_m)$ have $O(mn)$ intersections. It is routine to check all the
properties:

If $H$ is a hole (inner or subdivided), then $\varphi_2$ fixes 
$\partial \varphi_1 (H)$. Therefore, $\varphi$ also restricts to
a $\partial$-automorphism of~$\surf$.

The collections of curves $\alpha_1, \dots, \alpha_m$ and $\varphi(\beta_1),
 \dots, (\beta_m)$ have same intersection properties as
the collections $\varphi_1(\alpha_1),\dots$, $\varphi_1(\alpha_m)$ and
$\varphi_2(\varphi_1(\beta_1)),\dots$, $\varphi_2(\varphi_1(\beta_m))$. 
Since each $\alpha_i$ and each $\beta_j$ was subdivided at most three times
in the construction, by
Claims~\ref{c:same-holes}, \ref{c:tg}, and~\ref{c:th}, these collections have
at most $O(mn)$
intersections. The proof of the theorem is finished, except for
Claim~\ref{c:same-holes}.
\end{proof}

\begin{proof}[Proof of Claim~\ref{c:same-holes}.] Given $G_1$ 
and ${G_2}$, we form
auxiliary planar graphs $\tilde G_1$ and $\tilde G_2$ on a vertex set~$\tilde V$
by contracting all hole edges and removing the resulting loops and multiple
edges. We note that a loop cannot arise from an edge that
was a part of some $\alpha_i$ or~$\beta_j$.

Then we consider planar drawings $D_{\tilde G_1}$ and
$D_{\tilde G_2}$ forming a simultaneous embedding 
as in Theorem~\ref{t:spiked}, with each edge of $\tilde G_1$ crossing each
edge of $\tilde G_2$ at least once and most a constant number of times.

Let $v_H\in \tilde V$ be the vertex obtained by contracting the
hole edges on the boundary of a hole~$H$. Since the drawings
$D_{\tilde G_1}$ and $D_{\tilde G_2}$ are piecewise linear,
in a sufficiently small neighborhood of $v_{H}$ the edges
are drawn as radial segments.

We would like to replace $v_H$ by a small circle and thus turn
the drawings $D_{\tilde G_1}$, $D_{\tilde G_2}$ into the required
drawings $D'_{G_1}$, $D'_{G_2}$. But a potential problem is that
the edges in $D_{\tilde G_1}$, $D_{\tilde G_2}$ may enter
$v_H$ in a wrong cyclic order.

We claim that the edges in $D_{\tilde G_1}$ entering $v_H$ have the
same cyclic ordering around $v_H$ as the corresponding
edges around the hole $H$ in the drawing $D_{G_1}$. Indeed,
by contracting the hole edges in the drawing $D_{G_1}$, we obtain
a planar drawing $D^*_{\tilde G_1}$ of $\tilde G_1$ in which the cyclic order
around $v_H$ is the same as the cyclic order around
$H$ in $D_{G_1}$
Since $\tilde G_1$ was obtained by edge contractions from
a maximal planar graph, it is maximal as well
(since an edge contraction cannot create a non-triangular face),
and its drawing is unique up to an automorphism of $S^2$ (Lemma~\ref{l:drawings-same}).
Hence the cyclic ordering of edges around $v_H$
in $D_{\tilde G_1}$ and in $D^*_{\tilde G_1}$ is either the same
(if $D_{\tilde G_1}$ and $D^*_{\tilde G_1}$ are directly
equivalent),
or reverse (if $D_{\tilde G_1}$ and $D^*_{\tilde G_1}$ are mirror-equivalent).
However, Theorem~\ref{t:spiked} allows us to 
choose the drawing $D_{\tilde G_1}$ so that it is
directly equivalent to $D^*_{\tilde G_1}$, and then the cyclic
orderings coincide. A similar consideration applies
for the other graph~$G_2$.

The edges  of $D_{\tilde G_1}$ may still be placed to wrong positions among
the edges in $D_{\tilde G_2}$, but this can be rectified
at the price of at most one extra crossing for every pair of
edges entering $v_H$, as
the following picture indicates (the numbering specifies
the cyclic order of the edges around $H$ in $D_{G_1}\cup D_{G_2}$):
\immfig{arccr-4}

It remains to draw the edges of $G_1$ and $G_2$
that became loops or multiple edges after the contraction of the hole
edges. Loops can be drawn along the circumference of the hole,
and multiple edges are drawn very close to the corresponding single edge.

In this way, every edge of $G_1$ still has at most a constant number
of intersections with every edge of $G_2$, and every two such edges
intersect at least once unless at least one of them became a loop
after the contraction. Consequently, whenever $(f_1,f_2)$ is a signature
of an inner hole, the corresponding faces $f_1'$ and $f_2'$ intersect.
This finishes the proof.
\end{proof}

\ifGD
\heading{Acknowledgement. }
We would like to thank the authors of \cite{Geelen:Explicit-bounds-for-graph-minors-2013} for making a draft of their paper available to us,
and, in particular, T.~Huynh for an e-mail correspondence.
\bibliographystyle{alpha}
\bibliography{cx.bib}

\clearpage
\appendix
\section*{Appendix}
\else
\fi
\section{Reducing the Genus to \boldmath $O(m+n)$}
\label{s:gen_O(m+n)}

In this section we prove Proposition~\ref{p:bound_g}(i) as well as
Proposition~\ref{p:bound_g_nonorientable}.
We begin with several definitions.

\subsection{Cutting Along Curves}
\label{subs:cutting}
Let $\surf$ be an (orientable or nonorientable) surface with boundary.
By $h(\surf)$ we denote the number of holes in $\surf$ and by $g(\surf)$ we denote
the (orientable or nonorientable) genus of $\surf$. 

Now let $\delta$ be a properly embedded curve in $\surf$ (i.e., either a simple closed curve 
that avoids the boundary $\partial\surf$, or a simple arc whose endpoints lie on $\partial\surf$).
The curve $\delta$ is called \emph{separating} if $\surf \setminus \delta$
has two components. Otherwise, $\delta$ is \emph{non-separating}.

We denote by $\cut{\surf}{\delta}$ the (possibly disconnected) 
surface obtained by cutting $\surf$ 
along $\delta$. 
If $\delta$ is non-separating, then $\cut{\surf}{\delta}$
is connected. Otherwise, $\cut{\surf}{\delta}$ has two components, which we denote
by $\cut{\surf}{\delta}^1$ and $\cut{\surf}{\delta}^2$.

Now we recall basic properties of the \emph{Euler
characteristic} of a surface. Given a triangulated surface $\surf$, the Euler
characteristic $\chi(\surf)$ is defined as the number of vertices plus number of
triangles minus the number of edges in the triangulation. It is well known that
the Euler characteristic is a topological invariant and equals $2 - 2g(\surf) -
h(\surf)$ if $\surf$ is orientable, and $2 - g(\surf) - h(\surf)$
if $\surf$ is nonorientable.

To work simultaneously with orientable and nonorientable surfaces, it is also
convenient to define the \emph{Euler genus} of $\surf$ as $g_e(\surf) := 2 -
\chi(\surf) - h(\surf)$. That is, $g_e(\surf) = g(\surf)$ if $\surf$ is
nonorientable, and $g_e(\surf) = 2g(\surf)$ if $\surf$ is orientable.

We have the following relations for the Euler characteristic:
\begin{center}
\begin{tabular}{r|c|c}
 & $\delta$ is non-separating & $\delta$ is separating \\
\hline
$\delta$ is a closed curve & $\chi(\surf) = \chi(\cut{\surf}{\delta})$ & $\chi(\surf) =
\chi(\cut{\surf}{\delta}^1) + \chi(\cut{\surf}{\delta}^2)$ \\
\hline
$\delta$ is an arc & $\chi(\surf) = \chi(\cut{\surf}{\delta}) - 1$& $\chi(\surf) =
\chi(\cut{\surf}{\delta}^1) + \chi(\cut{\surf}{\delta}^2) - 1$\\
\end{tabular}
\end{center}

The relations above also allow us to
 relate the genus of $\surf$ and the genus of
the surface(s) obtained after a cutting.

Let us call a closed curve $\delta$ in $\surf$ \emph{two-sided}
if a small closed neighborhood of $\delta$ is homeomorphic 
 to the annulus $S^1 \times [0,1]$; otherwise,
$\delta$ is \emph{one-sided} (and a small closed neighborhood
of $\delta$ is a M\"obius band).
Note that every orientable surface contains only two-sided closed curves.

\begin{lemma}
\label{l:genera}
We have the following relations for genera:
\begin{enumerate}[(a)]
  \item
If $\nsurf$ is orientable, then
    $$
g(\surf) = 
\left\{ \begin{array}{ll}
    g(\cut{\surf}{\delta}^1) + g(\cut{\surf}{\delta}^2) & \mbox{if $\delta$ is separating};\\
   g(\cut{\surf}{\delta}) & \mbox{if $\delta$ is a non-separating arc connecting}\\
               & \mbox{two different boundary components};\\
 g(\cut{\surf}{\delta}) + 1 & \mbox{if $\delta$ is a non-separating closed
  curve, or}\\
               & \mbox{a non-separating arc with both endpoints}\\
               & \mbox{in a single boundary component.}\\
        \end{array} \right.
$$
\item
  If $\nsurf$ is orientable or nonorientable, then
    $$
g_e(\surf) = 
\left\{ \begin{array}{ll}
    g_e(\cut{\surf}{\delta}^1) + g_e(\cut{\surf}{\delta}^2) & \mbox{if $\delta$ is separating};\\
   g_e(\cut{\surf}{\delta}) & \mbox{if $\delta$ is a non-separating arc connecting}\\
               & \mbox{two different boundary components};\\
 g_e(\cut{\surf}{\delta}) + 1 & \mbox{if $\delta$ is a non-separating one-sided
  closed curve}\\
g_e(\cut{\surf}{\delta}) + 2 & \mbox{if $\delta$ is a non-separating arc with both
endpoints}\\
               & \mbox{in a single boundary component, or}\\
               & \mbox{a non-separating two-sided closed curve.}

        \end{array} \right.
$$

\end{enumerate}
\end{lemma}

Note that $(b)$ implies $(a)$. However, it is still convenient to state $(a)$
separately.

\begin{proof}
A simple case analysis yields the following relations for the numbers of holes:
$$
h(\surf) = 
\left\{ \begin{array}{ll}
    h(\cut{\surf}{\delta}^1) + h(\cut{\surf}{\delta}^2) - 2 & \mbox{if $\delta$ is
a separating closed curve};\\
	 h(\cut{\surf}{\delta}) - 2 & \mbox{if $\delta$ is
a two-sided non-separating closed curve};\\
	 h(\cut{\surf}{\delta}) - 1 & \mbox{if $\delta$ is
a one-sided non-separating closed curve};\\
	 h(\cut{\surf}{\delta}^1) + h(\cut{\surf}{\delta}^2) - 1 & \mbox{if $\delta$ is
a separating arc};\\
	 h(\cut{\surf}{\delta}) + 1 & \mbox{if $\delta$ is a non-separating arc
connecting}\\
               & \mbox{two different boundary components};\\
	h(\cut{\surf}{\delta}) - 1 & \mbox{if $\delta$ is a non-separating arc with
both}\\
               & \mbox{endpoints in a single boundary component.}\\
        \end{array} \right.
$$
The proof now follows by simple computation from the table above the lemma
 and the relations $\chi(\surf) = 2 - 2g(\surf) - h(\surf)$ if $\surf$ is
 orientable and $\chi(\surf) = 2 - g_e(\surf) - h(\surf)$ if $\surf$ is
 orientable or nonorientable.
\end{proof}

\subsection{Orientable Surfaces}
Let $\surf$ be a surface, which may be orientable or nonorientable. A
\emph{handle-enclosing} curve 
is a separating closed curve $\lambda$ in $\surf$ that splits $\surf$ into
two components $\surf_{\langle \lambda \rangle}^+$ and $\surf_{\langle \lambda
\rangle}^-$ such that $\surf_{\langle \lambda \rangle}^-$ is
a \emph{torus with hole}---that is, an orientable surface of genus $1$ with one
boundary hole; here are two ways of looking at it:
\immfig{punctured_torus_twice}
 A system $L$ of handle-enclosing curves is \emph{independent}
if $\surf_{\langle \kappa \rangle}^- \cap \surf_{\langle \lambda \rangle}^- = \emptyset$ for every
two closed curves $\kappa,\lambda \in L$.


First we focus on proving Proposition~\ref{p:bound_g}~(i). For the remainder of this subsection,
all surfaces will be orientable.

For an orientable surface of genus $g$ with $h$ holes, we fix a
\emph{standard representation} of this surface, denoted by $\surf_{g,h}$. It
is obtained by removing interiors 
of $h$ pairwise disjoint disks $H_1, \dots, H_h$ in
the southern hemisphere of $S^2$ and by removing interiors of $g$ pairwise
disjoint disks $D_1, \dots, D_g$ in the northern hemisphere of
$S^2$ and then attaching a torus with hole along the boundary
of each $D_i$; see Fig.~\ref{f:s32}. 
Note that $\{\partial
D_i\}_{i=1}^g$ is an independent system of handle-enclosing curves.

\begin{figure}
\begin{center}
\includegraphics{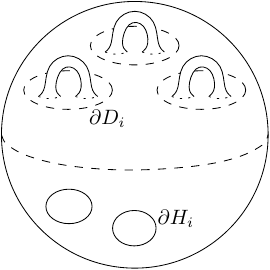}
\end{center}
\caption{The standard representation $\surf_{3,2}$.}
\label{f:s32}
\end{figure}

One of the tools we need (Lemma~\ref{l:stand_rep}) is that if we find
handle-enclosing curves in some
surface $\surf$ (of genus $g$ with $h$ holes), then we can find a homeomorphism
$\surf \to \surf_{g,h}$ mapping these curves to $\partial D_i$ extending some
given homeomorphism of the boundaries. However, we have to require a
technical condition on orientations, to be described next.

Let $\gamma_1, \dots, \gamma_h$ be a collection of the boundary curves of an
orientable surface $\surf$ (of arbitrary genus) with $h$ holes. We assume that
$\gamma_1, \dots, \gamma_h$ are also given with orientations. Since $\surf$
is orientable, it makes sense to speak of whether the orientations of $\gamma_1, \dots, \gamma_h$ 
are mutually \emph{compatible} or not: Choose and fix an orientation of $\surf$. 
Then we can say for each boundary curve $\gamma_i$ whether $\surf$ lies is on the 
right-hand side of $\gamma_i$ or on the left-hand side (with respect to the chosen orientation
of $\surf$ and the given orientation of $\gamma_i$).\footnote{If $\surf$ is smooth, for instance,
and if we choose a point $p_i$ in each $\gamma_i$, then  there are two distinguished
unit vectors in the tangent plane of $\surf$ at $p_i$: the inner normal vector $\nu_i$ of $\gamma_i$ within
$\surf$ (which is independent of any orientation), and the tangent vector $\tau_i$ of $\gamma_i$ 
(which depends on the orientation of $\gamma_i$). The orientations of the boundary curves
$\gamma_1,\ldots,\gamma_h$ are compatible if and only if each pair $(\nu_i,\tau_i)$ determines the same
orientation of $\surf$.}
\begin{lemma}
\label{l:planar_from_boundary}
  Let $\surf$ be a planar surface with $h$ holes. Let $\gamma_1, \dots,
  \gamma_h$ be the boundary curves of $\surf$ given with compatible
  orientations. Let $\zeta\colon \partial \surf \to \partial \surf_{0,h}$ be a
 homeomorphism
  such that the orientations (induced by $\zeta$) of the curves $\zeta(\gamma_1),
  \dots, \zeta(\gamma_h)$ are compatible. Then $\zeta$ can be extended to
a homeomorphism $\bar \zeta \colon \surf \to \surf_{0,h}$.
\end{lemma}

The lemma is generally known and the proof is quite straightforward. 
We keep the proof here for completeness (and for lack of a reference). Similar
remark applies to Lemma~\ref{l:stand_rep} below.

\begin{proof}
  If $h = 0$, then the claim follows immediately from the classification of
  surfaces. For $h = 1$,
 an arbitrary homeomorphism $\partial \surf \to \partial \surf_{0,h}$ 
 can be extended to a homeomorphism $\surf \to
  \surf_{0,h}$ (between disks) by `coning'.

 For $h > 1$ we prove the lemma by induction in $h$. We connect two
 (closed) boundary curves $\gamma_1$, $\gamma_2$ with an
 arc $\delta$ inside $\surf$ attached at some points $a$ and $b$ and  we also
 connect $\zeta(\gamma_1)$ and $\zeta(\gamma_2)$ inside $\surf_{0,h}$ with an
 arc $\delta'$ attached at $\zeta(a)$ and $\zeta(b)$. We cut $\surf$ and
 $\surf_{0,h}$ along $\delta$ and $\delta'$, obtaining surfaces $\surf^*$
 and $\surf^*_{0,h}$ with one hole less. 
 
 The holes $\gamma_3, \dots, \gamma_h$ are
 kept in $\surf^*$, while the holes $\gamma_1$ and $\gamma_2$ and the arc $\delta$
 in $\surf$ induce a boundary curve $\gamma^*$ in $\surf^*$ composed of four arcs $\gamma^*_1$,
 $\delta^*_1$, $\gamma^*_2$ and $\delta^*_2$. Since the orientations of
 $\gamma_1, \dots, \gamma_h$ are compatible, the arcs $\gamma_1^*$ and
 $\gamma_2^*$ are concurrently oriented as subarcs of $\gamma^*$, and they
 induce an orientation of $\gamma^*$ still compatible with $\gamma_3, \dots,
 \gamma_h$. 
 
 Similarly, we obtain an orientation on the new hole $\gamma'^*$ in
 $\surf^*_{0,h}$. We can also  extend $\zeta$ so that $\zeta(\gamma^*) =
 \zeta(\gamma'^*)$ (running along $\delta^*_1$ and $\delta^*_2$ with same
 speed). By induction, there is a homeomorphism $\bar \zeta^* \colon \surf^* \to
 \surf^*_{0,h}$, and the resulting $\bar \zeta$ is obtained by gluing $\surf^*$
 and $\surf^*_{0,h}$ back to $\surf$ and $\surf_{0,h}$.
\end{proof}

\begin{lemma}
\label{l:stand_rep}
Let $(\lambda_1, \dots, \lambda_s)$ be an independent system of handle-enclosing
curves in a surface $\surf$ of genus $g$ with $h$ holes, $s \leq g$. Let $\{\gamma_i\}_{i=1}^h$ be
the system of the boundary curves of the holes in $\surf$.
Then there is a homeomorphism $\psi \colon \surf \rightarrow \surf_{g,h}$ such that
$\psi(\gamma_i) = \partial H_i$, $i=1,2,\ldots,h$, and $\psi(\lambda_i) =
\partial D_i$, $i=1,2,\ldots,s$. Moreover, $\psi$ can be prescribed
on the $\gamma_i$, assuming that it preserves compatible orientations.
\end{lemma}

\begin{proof}
First we remark that we can assume that $s = g$. If $s < g$, we can 
extend $(\lambda_1, \dots, \lambda_s)$ to an independent system of
handle-enclosing of size $g$: We cut away each torus with hole  
$\surf^-_{\langle \lambda_i \rangle}$, obtaining a surface of genus $g-s$ homeomorphic to
$\surf_{g-s,h+s}$. Then we can find an independent system of $g-s$
handle-enclosing loops  in this surface. In sequel, we assume 
$s = g$.

  Let us cut $\surf$ along the curves $\lambda_1,\ldots,\lambda_s$.
 It decomposes into
a collection $T_1,\ldots,T_g$, where each $T_i$ is a torus with hole
 (with $\partial T_i=\lambda_i$),
and one planar surface $\surff$ with $g + h$ holes 
(the boundary curves of $\surff$ are the
$\lambda_i$ and the $\gamma_i$). In particular, $\surf$
decomposes into the same collection of surfaces (up to a homeomorphism) as
$\surf_{g,h}$ when cut along~$\partial D_i$. Let $\surff'$ be the planar
surface in this decomposition of $\surf_{g,h}$.

As we assume in the lemma, $\psi$ can be prescribed on some closed curves of $\partial
\surff$ while preserving compatible orientations. It can also be extended
so that it maps each  $\lambda_i$ to $\partial D_i$,
 while preserving compatible orientations
between $\surff$ and $\surff'$. Then
we have, by Lemma~\ref{l:planar_from_boundary}, a homeomorphism between
$\surff$ and $\surff'$ extending $\psi$. 

Finally, this homeomorphism can  also be extended to all the
$T_i$, one by one. Note
that preserving the orientations is not an issue in this case since the
torus with hole admits an automorphism reversing the orientation of the
boundary curve.
\end{proof}

We state the following corollary of Lemma~\ref{l:stand_rep}, which will be useful 
in Section~\ref{s:non_or}.

\begin{corollary}
\label{c:comp_ori}
  Let $\surf_1$ and $\surf_2$ be two orientable surfaces of genus $g$ with $h$
  holes. Let $\zeta \colon \partial \surf_1 \to \partial \surf_2$ be a
  homeomorphism of the boundaries that preserves compatible orientations. Then
  $\zeta$ extends to a homeomorphism  $\psi$ of $\surf_1$ and $\surf_2$. 
\end{corollary}

\begin{proof}
  We find an arbitrary homeomorphism $\zeta_1 \colon \partial \surf_1 \to
  \partial \surf_{g,h}$
  that preserves compatible orientations. Then the homeomorphism
  $\zeta_2\colon \partial \surf_2  \to \partial \surf_{g,h}$ defined as $\zeta_2 = \zeta_1 
  \zeta^{-1}$ preserves compatible orientations as well. Using
  Lemma~\ref{l:stand_rep} (with $s = 0$), we obtain extensions $\psi_1\colon
  \surf_1 \to \surf_{g,h}$ and $\psi_2\colon\surf_2 \to \surf_{g,h}$. Then $\psi
  := \psi^{-1}_2  \psi_1$ is the required homeomorphism.
\end{proof}

\begin{lemma}
\label{l:cut_S}
Let $\surf$ be a surface of genus $g$ with $h$ holes.
Let $(\delta_1, \dots, \delta_n)$ be an almost disjoint system of curves on $\surf$. 
Then there is an
independent system of $s \geq g - n$ handle-enclosing curves $\lambda_1, \dots,
\lambda_s$ such that each of the tori with hole
 $\surf^-_{\langle \lambda_j \rangle}$ 
is disjoint from $\bigcup_{i=1}^n \delta_i$.
\end{lemma}

\begin{proof}
Let us cut $\surf$ along $\{\delta_i\}_{i=1}^n$ obtaining several components $\surf_1,
\dots, \surf_q$. If we cut along the curves one by one, we see that 
Lemma~\ref{l:genera}(a) implies
$$
g(\surf_1) + \cdots + g(\surf_q) \geq g(\surf) - n. 
$$
In each $\surf_k$ we find an independent system of $g(\surf_k)$ handle-enclosing
curves (this can be done by transforming $\surf_k$ 
into the standard representation).  
The union of these independent systems yields
a system as in the lemma.
\end{proof}

\begin{proof}[Proof of Proposition~\ref{p:bound_g}(i)]
Let $\surf$ be a surface of genus $g$ with $h$ holes. Let
$A = (\alpha_1,\ldots,\alpha_m)$ and $B = (\beta_1,\ldots,\beta_n)$ be two
almost disjoint systems of
curves in $\surf$. 

Our task is to find a $\partial$-automorphism $\varphi$ of $\surf$
such that the number of crossings between $\alpha_1, \dots, \alpha_m$ and
$\varphi(\beta_1), \dots, \varphi(\beta_n)$ is at most $f_{g-s,h+s}(m,n)$,
 where  $s := \min(g - m, g - n)$. 
(Let us recall that we assume that $g > m,n$, and therefore $s > 0$.)

By Lemma~\ref{l:cut_S} there is an independent
system of handle-enclosing curves $\lambda_{1,\alpha}, \dots,
\lambda_{s,\alpha}$ such that the corresponding
tori with hole are disjoint from the curves in $A$. 
Consequently, by Lemma~\ref{l:stand_rep}, we have a homeomorphism 
$\psi_{\alpha}\colon \surf \rightarrow \surf_{g,h}$, extending a fixed 
homeomorphism $\psi'\colon \partial \surf \rightarrow \partial
\surf_{g,h}$, which preserves compatible orientations and maps each
$\lambda_{k, \alpha}$ to $\partial D_k$ (using the notation
from the definition of a standard representation).

Similarly, we have an independent system of handle-enclosing curves
$\lambda_{1,\beta}, \dots,
\lambda_{s,\beta}$ with the corresponding
tori with hole disjoint from the curves in $B$. We also have a
homeomorphism $\psi_{\beta}\colon \surf \rightarrow \surf_{g,h}$ extending $\psi'$
that maps the (closed) curves $\lambda_{k, \beta}$ to~$\partial D_k$.

Now we have two systems $A' = (\psi_\alpha(\alpha_1), \dots,
\psi_\alpha(\alpha_m))$ and $B' = (\psi_\beta(\beta_1), \dots,
\psi_\beta(\beta_m))$ of curves in $\surf_{g,h}$ avoiding the
tori with hole bounded by
the $\partial D_i$. Let us remove these tori (only for $i \leq s$)
obtaining a new surface $\surf^*$ of genus $g -s$ with $h + s$ holes. We find a
$\partial$-automorphism $\varphi^*$ of $\surf^*$ such that number 
of intersections
between $A'$ and $\varphi^*$-images of the curves in $B'$ is at most
$f_{g-s,h+s}(m,n)$. Since $\varphi^*$ fixes the boundary, 
it can be extended to a $\partial$-automorphism  $\varphi_{g,h}$ of 
$\surf_{g,h}$ while introducing no
new intersections. Finally, $\varphi := \psi^{-1}_{\alpha} \varphi_{g,h}
\psi_\beta$ is the required $\partial$-automorphism of~$\surf$.
\end{proof}

\subsection{Nonorientable Surfaces}
\label{subs:nonorientable}

The proof of Proposition~\ref{p:bound_g_nonorientable} is similar
to the previous proof but simpler, since we need not worry about
orientations.

\begin{lemma}
\label{l:homeo_no}
   Let $\nsurf$ and $\nsurf'$ be two nonorientable surfaces with the
 same genus and number of holes. Let $\psi_0\colon \partial \nsurf \to \partial
   \nsurf'$ be a homeomorphism of the boundaries. Then $\psi_0$ extends
   to a homeomorphism $\psi\colon \nsurf \to \nsurf'$.
\end{lemma}

\begin{proof}
  By the classification of surfaces, $\nsurf$ and $\nsurf'$ are homeomorphic. Given
  two boundary components, there is a self-homeomorphism of $\nsurf$ that
  exchanges these components.
 Therefore, we know that there is a homeomorphism $\psi_1\colon \nsurf \to
 \nsurf$ such that for each component $C$ of $\partial \nsurf$ the images
 $\psi_0(C)$ and $\psi_1(C)$ coincide (as sets). However, if we equip $C$ with
 an orientation, it might happen that $\psi_0(C)$ and $\psi_1(C)$ have opposite
 orientations. In such case,  we consider a self-homeomorphism
 $\psi_C$ of $\nsurf$ that reverts the orientation of $C$ and fixes all
 other boundary components. Here is an example of such a self-homeomorphism:
\immfig{passing_along_gamma}
Up to a homeomorphism, we can consider $\nsurf$ as a polygon with
 holes whose edges are identified according to the labels.
  By moving the middle hole along $\gamma$, we revert its orientation
without affecting the other holes.

By gradually composing $\psi_1$ with the $\psi_C$ for
 those $C$ on which orientations disagree, we can get a self-homeomorphism 
 of $\nsurf$ such that $\psi_0(C)$ and $\psi_2(C)$ have
 compatible orientations for every $C$. Finally, by a local modification of
 $\psi_2$ at small neighborhood of every $C$ we can get a self-homeomorphism
 $\psi$ of $\nsurf$ that agrees with $\psi_0$ on $\partial \nsurf$.
\end{proof}

Similar to the orientable case, we will use a certain canonical
representation $\nsurf_{g,h}$ for a nonorientable surface of genus $g$ with
$h$ holes. We recall that a \emph{cross-cap} in a nonorientable surface $\nsurf$ is a subset of
$\nsurf$ which is homeomorphic to a M\"{o}bius band. Note that the boundary of a
cross-cap is a single closed curve. A standard way of 
representing a nonorientable surface of genus $g$ with $h$ holes is to
remove $h$ disjoint disks from the $2$-sphere and replace other $g$ disjoint
disks with cross-caps. However, here it is more convenient to 
replace all but at most two of the cross-caps by handles:
indeed, for $g \geq 3$, a pair of
cross-caps can be replaced with a handle 
(this is sometimes called \emph{Dyck's Theorem},
see, e.g., \cite[Lemma~3]{FrancisWeeks:ConwayZIPProof-1999}; note that it is essential
that at least one cross-cap remain present).

Thus, we can define a \emph{convenient} representation (as opposed to
the standard one mentioned above) $\nsurf_{g,h}$ as follows.
We again start with the sphere $S^2$, and we remove $h$ pairwise
disjoint disks $H_1, \dots, H_h$. Then we remove
$\lfloor(g-1)/2\rfloor$ more disjoint
disks $D_1, \dots, D_{\lfloor(g-1)/2\rfloor}$ and
attach a torus with hole along boundary of each $D_i$. Finally, we 
remove one (for $g$ odd) or two (for $g$ even) extra disks
and we attach M\"obius bands along these disks.
 Here is the convenient representation of $\nsurf_{6,2}$:
\immfig{convenient_N_6_2}
%
%

\begin{lemma}
\label{l:stand_rep_no}
Let $(\lambda_1, \dots, \lambda_s)$ be an independent system of handle-enclosing
curves in a nonorientable surface $\nsurf$ of genus $g$ with $h$ holes, $s \leq
\lfloor(g-1)/2 \rfloor$. Let $\{\gamma_i\}_{i=1}^h$ be
the system of the boundary curves of the holes in $\nsurf$.
Then there is a homeomorphism $\psi \colon \nsurf \rightarrow \nsurf_{g,h}$ such that
$\psi(\gamma_i) = \partial H_i$, $i=1,2,\ldots,h$, and $\psi(\lambda_i) =
\partial D_i$, $i=1,2,\ldots,s$. Moreover, $\psi$ can be prescribed
on the $\gamma_i$.
\end{lemma}

\begin{proof}
  The proof is analogous to that of Lemma~\ref{l:stand_rep}.
  Let us cut $\nsurf$ along the curves $\lambda_1,\ldots,\lambda_g$.
 It decomposes into
a collection $T_1,\ldots,T_s$, where each $T_i$ is a torus with hole
 (with $\partial T_i=\lambda_i$),
and one nonorientable surface $\hat \nsurf$ of genus $g - 2s$ with $h + s$ holes
(the boundary curves of $\nsurf$ are the
$\lambda_i$ and the $\gamma_i$). In particular, $\nsurf$
decomposes into the same collection of surfaces (up to a homeomorphism) as
$\nsurf_{g,h}$ when cut along the $\partial D_i$. Let $\nsurf'$ be
the nonorientable surface in the decomposition of $\nsurf_{g,h}$.

By Lemma~\ref{l:homeo_no}, we have a homeomorphism
between $\hat \nsurf$ and $\nsurf'$ extending a given
homeomorphism of the boundary curves.
This homeomorphism can be also extended to all $T_i$, one by one. 
\end{proof}

\begin{lemma}
\label{l:cut_S_no}
Let $\nsurf$ be a nonorientable surface of genus $g$ with $h$ holes.
Let $(\delta_1, \dots, \delta_n)$ be an almost disjoint system of curves on $\surf$. 
Then there is an
independent system of $s \geq g/2 - 2n -1$ handle-enclosing curves $\lambda_1, \dots,
\lambda_s$ such that each of the tori with hole
 $\surf^-_{\langle \lambda_j \rangle}$ 
is disjoint from $\bigcup_{i=1}^n \delta_i$.
\end{lemma}

\begin{proof}
Let us cut $\nsurf$ along $\{\delta_i\}_{i=1}^n$, obtaining several components
$\surf_1,
\dots, \surf_q$, $q \leq n + 1$ (some of them may be orientable
and some nonorientable). 
Cutting along the curves one by one, 
we see that Lemma~\ref{l:genera}(b) implies
$$
g_e(\surf_1) + \cdots + g_e(\surf_q) \geq g_e(\surf) - 2n. 
$$
In each $\surf_k$ we find an independent system of at least $(g_e(\surf_k) -
2)/2$ handle-enclosing curves. Indeed, if $\surf_k$ is orientable, then we can
find even $g_e(\surf_k)/2$ such curves by transforming $\surf_k$ to the standard
representation. If $\surf_k$ is nonorientable, then we find at least
$(g_e(\surf_k) - 2)/2$ such curves by transforming $\surf_k$ to the convenient
representation.

The union of these independent systems yields
a system as in the lemma (using $g = g_e(\surf)$ and $q
\leq n+ 1$).
\end{proof}

\begin{proof}[Proof of Proposition~\ref{p:bound_g_nonorientable}]
  The proof is now almost same as for Proposition~\ref{p:bound_g}(i).

Let $\nsurf$ be a nonorientable surface of genus $g$ with $h$ holes. Let
$A = (\alpha_1,\ldots,\alpha_m)$ and $B = (\beta_1,\ldots,\beta_n)$ be two
almost disjoint systems of
curves in $\nsurf$. 

Our task is to find a $\partial$-automorphism $\varphi$ of $\nsurf$
such that the number of crossings between $\alpha_1, \dots, \alpha_m$ and
$\varphi(\beta_1), \dots, \varphi(\beta_n)$ is at most $\hat f_{g-2s,h+s}(m,n)$,
 where  $s := \min(\lceil g/2\rceil - 2m - 1, \lceil g/2 \rceil - 2n - 1)$. 
Note that $g - 2s = 4\maxmn + 2 - (g \smod 2)$ and $h + s = h
+ \lceil g/2 \rceil - 2\maxmn - 1$ as required ($\maxmn=\max(m,n)$). 
 (Let us also recall that we assume that $g > 4\maxmn + 2$, and so $s > 0$.)

By Lemma~\ref{l:cut_S_no} there is an independent
system of handle-enclosing curves $\lambda_{1,\alpha}, \dots,
\lambda_{s,\alpha}$ such that the corresponding
tori with hole are disjoint from the curves in $A$. 
Consequently, by Lemma~\ref{l:stand_rep_no}, we have a homeomorphism 
$\psi_{\alpha}\colon \nsurf \rightarrow \nsurf_{g,h}$, extending a fixed 
homeomorphism $\psi'\colon \partial \nsurf \rightarrow \partial
\nsurf_{g,h}$, which maps each
 $\lambda_{k, \alpha}$ to $\partial D_k$.

Similarly, we have an independent system of handle-enclosing curves
$\lambda_{1,\beta}, \dots,
\lambda_{s,\beta}$ with the corresponding
tori with hole disjoint from the curves in $B$. We also have a
homeomorphism $\psi_{\beta}\colon \nsurf \rightarrow \nsurf_{g,h}$ extending $\psi'$
that maps the each $\lambda_{k, \beta}$ to~$\partial D_k$.

Now we have two systems $A' = (\psi_\alpha(\alpha_1), \dots,
\psi_\alpha(\alpha_m))$ and $B' = (\psi_\beta(\beta_1), \dots,
\psi_\beta(\beta_m))$ of curves in $\nsurf_{g,h}$ avoiding the
tori with hole bounded by
the $\partial D_i$. Let us remove these tori (only for $i \leq s$)
obtaining a new surface $\nsurf^*$ of genus $g -2s$ with $h + s$ holes. We find a
$\partial$-automorphism $\varphi^*$ of $\nsurf^*$ such that number 
of intersections
between $A'$ and $\varphi^*$-images of the curves in $B'$ is at most
$f_{g-s,h+s}(m,n)$. Since $\varphi^*$ fixes the boundary, 
it can be extended to a $\partial$-automorphism  $\varphi_{g,h}$ of 
$\nsurf_{g,h}$ while introducing no
new intersections. Finally, $\varphi := \psi^{-1}_{\alpha} \varphi_{g,h}
\psi_\beta$ is the required $\partial$-automorphism of~$\nsurf$.
\end{proof}

\section{Reducing the Orientable Genus to \boldmath$0$}
\label{s:redu2}


Here we prove Proposition~\ref{p:bound_g}(ii). We start with
some preliminaries.

Let $g \geq 1$ and let $M_g$ be a $4g$-gon with edges consecutively labeled 
$a_1^+$,  $b_1^+$, $a^-_1$, $b^-_1$, $a_2^+$, $b_2^+$, $a_2^-$,\dots, $b^-_g$. 
The edges are
oriented: the $a^+_i$ and $b^+_i$ clockwise, and  the $a^-_i$ and $b^-_i$
counter-clockwise. By identifying the edges
$a^+_i$ and $a^-_i$, as well as $b^+_i$ and $b^-_i$, according to their
orientations, we
obtain an orientable surface $\surf_g$ of genus $g$. The polygon $M_g$ is a
\emph{canonical polygonal schema} for~$\surf_g$. 

Removing the interior of $M_g$, we obtain a system of $2g$ \emph{loops} (closed
curves
with distinguished endpoints), all having the same endpoint. 
This system of loops is a \emph{canonical system of loops} for $\surf_g$. 
The loop in $\surf_g$ obtained by identifying $a^+_i$ and $a^-_i$ is denoted 
by $a_i$. Similarly, we have the loops $b_i$. In the sequel, we 
assume that an orientable surface $\surf$ is given and
we look for a canonical system of loops induced by some canonical polygonal
schema; here is an example with the double-torus: 
\immfig{canonical_loops}


Given a surface $\surf$ with boundary, we can extend the definition of canonical
system of loops for $\surf$ in the following way. We contract each boundary hole of
$\surf$ obtaining a surface $\tilde \surf$ without boundary. A system of loops $(a_1,
b_1, a_2, \dots, b_g)$ in $\surf$ is a \emph{canonical system of loops} for $\surf$ if
no loop intersects the boundary of $\surf$ and the resulting system $(\tilde a_1,
\tilde b_1, \tilde a_2 \dots, \tilde b_g)$ after the contractions is a
canonical system of loops for $\tilde \surf$.


\begin{lemma}
\label{l:can_hom}
Let $\lsys = (a_1, b_1, \dots, b_g)$ and $\lsys' = (a'_1, b'_1, \dots, 
b'_g)$ be two canonical systems of loops for a given orientable surface $\surf$
with or without boundary. Then, there is a $\partial$-automorphism $\psi$ of
$\surf$ transforming $\lsys$ to $\lsys'$ (it may not keep\footnote{It can be
seen from the proof that the labels are either kept or $\psi$ transforms $(a_1,
b_1, \dots, b_g)$ to $(b'_g, a'_g, \dots, 
a'_1)$.} the labels; that is,
$a_1$ need not be transformed to $a'_1$, etc.). 
\end{lemma}

\begin{proof}
If $\surf$ has no boundary, then the lemma immediately follows from the
definitions; $a_i$ is mapped to $a'_i$ and $b_i$ to $b'_i$.

If $\surf$ has a boundary, we first contract each of the holes, obtaining a surface
$\tilde \surf$. In particular, each hole $H_i$ becomes a point $h_i$. 
Let $\tilde \lsys$ and $\tilde \lsys'$ be the resulting canonical systems on
$\tilde \surf$. We find an automorphism $\tilde \psi_1$ of $\tilde \surf$
transforming $\tilde \lsys$ to $\tilde \lsys'$. 

The automorphism $\tilde \psi_1$ may or may not be orientation-preserving. If
$\tilde \psi_1$ preserves the orientation of $\tilde \surf$, 
we set $\tilde \psi_2 := \tilde \psi_1$. If $\tilde \psi_1$ 
reverts the orientation we set $\tilde \psi_2 := \tilde \psi_1  \tilde
\psi$ where $\tilde \psi$ is an orientation-reversing 
automorphism of $\tilde \surf$ transforming $\lsys$ to $\lsys$; see
Fig.~\ref{f:revert_ori}. In any case,
 $\tilde \psi_2$ preserves the orientation and
maps $\lsys$ to $\lsys'$.

\begin{figure}
\begin{center}
\includegraphics{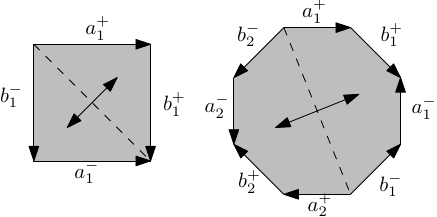}
\caption{Two examples of an automorphism $\tilde \psi$ reverting the
orientation of $\tilde \surf$ induced by a reflection of $M_g$ along one of the
diagonals. In general, $(a_1, b_1, a_2, \dots, b_g)$
is mapped to $(b_g, a_g, b_{g-1}, \dots, a_1)$.}
\label{f:revert_ori}
\end{center}
\end{figure}

We adjust $\tilde \psi_2$ to fix each 
$h_i$
(this is possible since $\tilde \surf$ remains connected after cutting
along $\tilde \lsys'$ and also since the  
points $h_i$ are disjoint from
the loops of $\tilde \lsys$). Then we decontract the points $h_i$ back to
holes, obtaining $\surf$. After this $\tilde \psi_2$ induces the required
$\partial$-automorphism $\psi$ of~$\surf$. (The obvious automorphism of $\surf$
obtained by decontraction of the holes need not fix boundary; however, it
can easily be modified to fix the boundary since $\psi_2$ preserves the
orientation.)
\end{proof}



We need a theorem of Lazarus et al.~\cite{Lazarus-al} in the following version.

\begin{theorem}[{cf.~\cite[Theorem 1]{Lazarus-al}}]
\label{t:L_al}
  Let $\surf$ be a triangulated surface without boundary 
  with total of $n$ vertices, edges and
  triangles. Then there is a canonical system of loops for $\surf$ avoiding the
  vertices of $\surf$ and meeting edges of $\surf$ at a finite number of points
  such that each loop of the system has at most $O(n)$ intersections with 
  the edges of the triangulation.
\end{theorem}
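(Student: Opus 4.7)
The plan is to follow the classical tree-cotree strategy: first produce a non-canonical system of $2g$ loops that cuts $\surf$ into a single polygon, then canonicalize the resulting polygonal schema while controlling the combinatorial complexity. Let $G$ be the $1$-skeleton of the triangulation and $G^*$ its dual graph. I would fix a vertex $v_0 \in V(G)$ as basepoint, a spanning tree $T$ of $G$, and a spanning tree $T^*$ of the subgraph of $G^*$ whose edges are the duals of the non-$T$ primal edges. A standard Euler-characteristic count shows that exactly $2g$ primal edges $e_1,\ldots,e_{2g}$ remain outside $T$ and outside the primal support of $T^*$. For each $i$, let $\hat\gamma_i$ be the loop at $v_0$ in $G$ formed by the $T$-path from $v_0$ to one endpoint of $e_i$, the edge $e_i$, and the $T$-path back. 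Each $\hat\gamma_i$ has combinatorial length $O(n)$, and cutting $\surf$ along $\hat\gamma_1,\ldots,\hat\gamma_{2g}$ yields a single $4g$-gon $P$ whose boundary word $w$ has length $4g$ over the alphabet $\{e_i^{\pm 1}\}$.

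Next I would realize each combinatorial loop $\hat\gamma_i$ as a simple curve $\gamma_i$ in $\surf$ that avoids the vertices of the triangulation and meets its edges transversally. The realization runs $\gamma_i$ through a regular neighborhood of $G$, following each graph edge through one of its two adjacent triangles and turning around each traversed vertex through a carefully chosen sequence of triangles; with a consistent choice of sides this can be arranged so that the total contribution to the edge-crossing count is $O(1)$ per graph-edge of $\hat\gamma_i$, giving $O(n)$ crossings per $\gamma_i$. The complement of $\gamma_1,\ldots,\gamma_{2g}$ in $\surf$ is then an open $4g$-gon, and we have obtained a (not yet canonical) system of loops with the desired crossing bound.

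The final step is to transform the boundary word $w$ of $P$ into the canonical form $a_1 b_1 a_1^{-1} b_1^{-1}\cdots a_g b_g a_g^{-1} b_g^{-1}$ by a sequence of standard polygon surgeries (locate a separated inverse pair, cut $P$ along a suitable diagonal, and reglue in commutator position with a partner inverse pair). Each surgery replaces two sides of $P$ by two new loops, each homotopic rel.\ $v_0$ to a concatenation of $O(1)$ of the current loops with $O(1)$ subarcs of the current polygon boundary.

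The main obstacle will be complexity control in this canonicalization phase: a naive execution multiplies loop lengths by a constant factor at each of the $O(g)$ iterations, yielding an unusable $2^{O(g)}\cdot n$ bound. The technical heart of Lazarus et al.\ is a careful \emph{reduction schema} that processes one commutator $a_i b_i a_i^{-1} b_i^{-1}$ at a time and routes each newly created loop through previously-canonicalized portions of $\partial P$, so that every canonical loop remains expressible as a concatenation of $O(1)$ of the original $\gamma_i$ with $O(1)$ subarcs along $\partial P$. Combined with the $O(n)$ bound from the realization step, this keeps each canonical loop at $O(n)$ triangulation-edge crossings, as required.
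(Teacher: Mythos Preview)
The paper does not prove this theorem at all: it is quoted from Lazarus et al.\ (and attributed further back to Vegter and Yap), and used as a black box. So there is no ``paper's own proof'' to compare against; the relevant comparison is between your sketch and the argument in \cite{Lazarus-al}.

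As a summary of the Lazarus et al.\ strategy, your outline is accurate in shape: tree--cotree to get a cut system of $2g$ loops of length $O(n)$, geometric realization avoiding vertices, and then canonicalization of the resulting $4g$-gon word. You also correctly identify the real issue, namely that naive canonicalization blows up the loop lengths by a factor of $2^{O(g)}$.

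However, your proposal does not actually resolve that issue; it just names it and then appeals to ``the technical heart of Lazarus et al.'' for the reduction schema. That is precisely the content of the theorem you are asked to prove, so the argument is circular as written. If you want this to stand as a proof rather than a pointer to the literature, you would need to spell out the invariant that keeps each intermediate loop expressible as a bounded-length word in the original $\gamma_i$ (or bounded-length path in the original triangulation), and verify that each elementary polygon surgery preserves it. Without that, the sketch is a correct roadmap but not a proof.
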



As we already mentioned in the introduction, the result is
essentially due to Vegter and Yap~\cite{VegterYap}. Lazarus et al. 
provide more details (\cite{VegterYap} is only an extended abstract),
and they have a slightly different representation for the
canonical system of loops, which is more convenient for our purposes.

From Theorem~\ref{t:L_al} we easily derive the following extension.

\begin{proposition}
\label{p:canonical}
  Let $\surf$ be an orientable surface of genus $g$ with or without boundary. Let
  $D = (\delta_1, \dots, \delta_n)$ be an almost disjoint 
system of curves on $\surf$.
  Then there is a canonical system of loops $\lsys = (a_1, b_1, \dots, b_g)$
  such that $D$ and $\lsys$ have $O(gn + g^2)$ crossings.
\end{proposition}

For the proof, we need the following lemma, which may very well be
folklore, but which we
haven't managed to find in the literature.

\begin{lemma}
\label{l:O(g+n)}
Let $G$ be a nonempty graph with at most $n$ vertices and
edges, possibly with loops and/or multiple edges, embedded in an
orientable surface $\surf$ of genus $g$ without boundary.
Then there is a graph $G'$ without loops or
multiple edges and with $O(g+n)$ vertices
and edges that contains a subdivision of $G$ and triangulates~$\surf$.
\end{lemma}

In the proof below we did not attempt to optimize the 
constant in the $O$-notation.
We thank Robin Thomas for a suggestion that helped us to simplify the proof.

\begin{proof}
We can assume that every vertex is connected to at least one edge;
if not, we add loops.

Let us cut $\surf$ along the edges of $G$. 
We obtain several components $\surf_1,
\dots, \surf_q$. By Lemma~\ref{l:genera} we know that 
$$
 g(\surf_1) + \cdots + g(\surf_q) \leq g.
$$
First, whenever $g(\surf_i) > 0$ for some $i$, we introduce a
canonical system of loops inside $g(\surf_i)$. For this we need one vertex and
$2g(\surf_i)$ edges, which gives at most $3g$
new vertices and edges in total. 
In this way we obtain a graph $G^1$ (containing $G$). 

We cut $\surf$ along the edges of $G^1$; the resulting components are all
planar. Inside each component $\surf^1_i$  we introduce a new vertex $v$ and
connect it to all vertices on the boundary of $\surf^1_i$; $v$ can be
connected to some boundary vertex $u$ by multiple edges if $u$ occurs on the
boundary of $\surf^1_i$ in multiple copies. This is easily achievable if we
consider, up to a homeomorphism, $\surf^1_i$ as a polygon, possibly with
tiny holes inside; see the left picture:
\immfig{extending_triangulation} 
Since we have added at most $\deg u$ edges per vertex $u$ of $G^1$,
we obtain a graph $G^2$, still with $O(g+n)$ vertices and edges.


We cut $\surf$ along the edges of $G^2$. The resulting components $\surf^2_i$
are all planar and, in addition, they have a single boundary curve. 
We subdivide each edge of $G^2$ twice, we introduce a new vertex $w$ 
in each $\surf^2_i$, and we connect $w$ to all vertices on the boundary of 
$\surf^2_i$ (including the vertices obtained from the subdivision). 
If $w$ is connected to a vertex $u$ of $G^2$ on the boundary of $\surf^2_i$, 
we further subdivide the edge $uw$ and we connect the newly introduced 
vertex to the two neighbors of $u$ along the boundary of $\surf^2_i$; 
this is illustrated in the right picture above.

This yields the required graph $G'$. Indeed, we have subdivided all
loops and multiple edges in $G^2$, and we do not introduce any 
new loops or multiple edges (because of the subdivision of $uw$ edges). 
Each face of $G'$ is triangular;
therefore, we have a triangulation. 
The size of $G'$ is bounded by $O(g+n)$.
\end{proof}

\begin{proof}[Proof of Proposition~\ref{p:canonical}]
  If $\surf$ contains
  holes,  we contract them, find the canonical system on the contracted
  surface, and decontract the holes  (without affecting the number of
  crossings). Thus, we can assume that $\surf$ has no boundary.

  Now we form a graph $G$ embedded in $\surf$ in the following way. The vertex set
  of $G$ contains all endpoints of arcs in $D$. For a closed curve in $D$, we pick
  a vertex on the curve.
  Each arc of $D$ induces an edge in $G$. Each closed curve of $D$ induces a loop in
  $G$. This finishes the construction of~$G$.

  The graph $G$ has $O(n)$ vertices and edges. Let $G'$ be the graph from
  Lemma~\ref{l:O(g+n)} containing a subdivision of $G$.

%
%
%
Now we can use Theorem~\ref{t:L_al} for the triangulation
  given by $G'$ to obtain the required canonical system of loops.
\end{proof}

\begin{proof}[Proof of Proposition~\ref{p:bound_g}(ii)]
Let $\surf$ be a surface of genus $g$ with $h$ holes. Let $A = (\alpha_1,
\dots, \alpha_m)$ and $B = (\beta_1, \dots, \beta_n)$ be two almost-disjoint
systems of curves. Our task is to find a $\partial$-automorphism 
$\varphi$ of $\surf$
such that $\alpha_1, \dots, \alpha_m$ and $\varphi(\beta_1), \dots, 
\varphi(\beta_m)$ 
have at most $f_{0,h+1}(m',n')$ intersections,
 where $m' \leq cg(m+g)$ and $n'
\leq cg(n+g)$ for some constant $c$. Proposition~\ref{p:bound_g}(ii)
 then follows from the monotonicity of $f_{g,h}(m,n)$ in $m$ and $n$.

 Let $\lsys_\alpha$ be a canonical system of loops as in
 Proposition~\ref{p:canonical} used with $(\alpha_1, \dots, \alpha_m)$, and
let  $\lsys_\beta$ be a canonical system of loops  as in
  Proposition~\ref{p:canonical} used with $(\beta_1, \dots, \beta_n)$.
 
  According to Lemma~\ref{l:can_hom}, there is a 
$\partial$-automorphism $\psi$ of $\surf$ transforming $\lsys_\beta$ 
to $\lsys_\alpha$. This homeomorphism induces a new system of 
curves $B_{\psi} := (\psi(\beta_1), \dots, \psi(\beta_n))$.

We cut $\surf$ along $\lsys_\alpha$, obtaining a new, planar
 surface $\surf'$ with $h + 1$ holes
(one new hole appears along the cut). 
According to the
choice of $\lsys_\alpha$ and $\lsys_\beta$, 
the systems $A$ and $\lsys_\alpha$ have at most $O(gm + g^2)$ intersections.
Similarly, $B_\psi$ and $\lsys_\alpha$ have at most $O(gn + g^2)$
intersections. Thus, $A$ induces a system $A'$ of $m' \leq cg(m + g)$ 
new curves on 
$\surf'$, and $B_\psi$ induces a system $B'$ of $n' \leq cg(n + g)$ 
new curves on $\surf'$.
From the definition of $f$, we find a $\partial$-automorphism $\varphi'$
of $\surf'$  such that $A'$ has
at most $f_{0,h+1}(m', n')$ intersections with $\varphi'(B')$.
Then we glue $\surf'$ 
back to $\surf$, inducing the required $\partial$-automorphism $\varphi$ of $\surf$.
\end{proof}

\section{Reducing the Nonorientable Case to the Orientable One}
\label{s:non_or}
In this section, we prove Proposition~\ref{p:or_red}.

Let $\nsurf$ be a nonorientable surface with $h\geq 0$ holes and nonorientable genus $g\geq 1$.

Our approach to prove Proposition~\ref{p:or_red} is similar in spirit to the
proof of Proposition~\ref{p:bound_g}~(ii). The difference is that instead of
cutting an orientable surface along a canonical system of loops to get a planar one,
we cut the nonorientable surface $\nsurf$ along one distinguished closed curve 
so as to obtain an orientable surface.

We recall that, given a closed curve $\lambda$ on a surface $\nsurf$, 
the surface obtained by cutting
$\nsurf$ along $\lambda$ is denoted by $\cut{\nsurf}{\lambda}$.

Formally, an \emph{orientation-enabling curve} in a nonorientable surface $\nsurf$ is
a properly embedded closed curve $\lambda$ such that 
$\cut{\nsurf}{\lambda}$ is orientable. It follows that an orientation-enabling 
curve is non-separating,
 since attaching two orientable components along a closed curve 
yields an orientable surface. 

It is not hard to see that any nonorientable surface admits an
orientation-enabling curve; 
it can be explicitly found in the convenient representation of the surface introduced in Section~\ref{subs:nonorientable}.
For technical reasons, however, we will need to find an orientation-enabling
curve $\lambda$ 
that also satisfies two additional properties: $\lambda$ should be compatible with orientations
of the boundary curves of the holes in the surface (in a sense to be made precise below), and 
it should also be compatible with a given system $D$ of curves on $\nsurf$, in the sense that 
we can bound the number of intersections between $\lambda$ and $D$.

The first ingredient for the proof of Proposition~\ref{p:or_red} is an analogue of Lemma~\ref{l:can_hom}.
A perfect analogue would be to show that any two orientation-enabling curves of $\nsurf$ 
can be transformed into one another by a $\partial$-automorphism of $\nsurf$. 
However, it turns out that for nonorientable surfaces with holes
this is not true in general; see Example~\ref{e:many_io} 
below. For this reason, we need the requirement of compatible orientations in the following lemma.

\begin{lemma}
\label{l:oi_hom} Let $\nsurf$ be a nonorientable surface with boundary curves $\gamma_1, \dots,
\gamma_h$ and let $\lambda$ and $\kappa$ be two orientation-enabling curves in $\nsurf$.
Suppose that we have chosen orientations each of the curves $\gamma_1, \dots,
\gamma_h$ and for $\lambda$ and $\kappa$.

Supposed furthermore that the induced orientations of the boundary curves of $\cut{\nsurf}{\lambda}$ 
are mutually compatible, in the sense explained before Lemma~\ref{l:planar_from_boundary}, and that the same holds
for the boundary curves of $\cut{\nsurf}{\kappa}$ (we stress that the compatibility condition also applies to the boundary
 curves originating from $\lambda$ and $\kappa$, respectively).

Then there is a $\partial$-automorphism $\psi$ of
$\nsurf$ transforming $\lambda$ to $\kappa$. 
\end{lemma}

The second ingredient for the proof of Proposition~\ref{p:or_red} is the following existence result,
analogous to Proposition~\ref{p:canonical}.

\begin{proposition}
\label{p:oi_exists}
  Let $\nsurf$ be a nonorientable surface of genus $g$ with or without
  boundary. Let $\gamma_1, \dots,
  \gamma_h$ be the boundary curves of $\nsurf$ given with some orientations.
  Let $D = (\delta_1, \dots, \delta_n)$ be an almost disjoint 
system of curves on $\nsurf$.
  Then there is an orientation-enabling curve $\lambda$ such that $D$ and
  $\lambda$ have $O(g+n)$ crossings and such that $\lambda$ can be equipped with an
  orientation such that the induced orientations of the boundary curves on
  $\nsurf_{\langle \lambda \rangle}$ are mutually compatible.
\end{proposition}

Finally, we will need the following simple lemma that relates the genus and number of
holes of $\nsurf$ to the corresponding quantities for $\cut{\nsurf}{\lambda}$. 


\begin{lemma}
\label{l:cut_nonor}
Let $\nsurf$ be a nonorientable surface of genus $g$ with $h$ holes and let
$\lambda$ be an orientation-enabling curve. Let $g_\lambda$ be the
(orientable) genus of $\nsurf_{\langle \lambda \rangle}$ and $h_\lambda$ be the number of holes of
$\nsurf_{\langle \lambda \rangle}$.
\begin{itemize}
\item[\textup{(a)}]
 If $g$ is odd, then $\lambda$ is one-sided,
$g_\lambda = (g-1)/2$, and $h_\lambda = h + 1$.
\item[\textup{(b)}]
 If $g$ is even, then $\lambda$ is two-sided, $g_\lambda = (g-2)/2$, and $h_\lambda = h + 2$.
\end{itemize}
\end{lemma}

\begin{proof}
Let us recall that we have the following relations for the Euler characteristic:
$\chi(\nsurf) = 2 - g - h$ since $\nsurf$ is nonorientable, and
$\chi(\nsurf_{\langle \lambda \rangle}) = 2 - 2g_\lambda - h_\lambda$ since
$\nsurf_{\langle \lambda \rangle}$ 
is orientable.
We also have $\chi(\nsurf) = \chi(\nsurf_{\langle \lambda \rangle})$ 
since the Euler characteristic of the closed curve $\lambda$ is $0$. 

If $\lambda$ is one-sided, then $h_\lambda = h+1$,
 implying $g_\lambda = (g-1)/2$. In particular, $g$ must be odd.
If $\lambda$ is two-sided, then $h_\lambda = h +2$,
 implying $g_\lambda = (g-2)/2$. In particular, $g$ must be even.
This proves the lemma, since we have exhausted all possibilities.
\end{proof}

Now we are ready to prove Proposition~\ref{p:or_red}.

\begin{proof}[Proof of Proposition~\ref{p:or_red}.]
Let $\nsurf$ be a nonorientable surface of (nonorientable) genus $g$ with $h$ holes. Let $A = (\alpha_1,
\dots, \alpha_m)$ and $B = (\beta_1, \dots, \beta_n)$ be two almost-disjoint
systems of curves. Our task is to find a $\partial$-automorphism 
$\varphi$ of $\nsurf$
such that $\alpha_1, \dots, \alpha_m$ and $\varphi(\beta_1), \dots, 
\varphi(\beta_m)$ 
have at most $f_{g',h'}(c(g+m),c(g+n))$
intersections, with $g'=\lfloor (g-1)/2 \rfloor$ and 
$h'=h + 1 + (g \smod 2)$. 

Let us fix orientations of the boundary curves of $\nsurf$ arbitrarily.
 Let $\lambda_\alpha$ be an orientation-enabling curve obtained from  
 Proposition~\ref{p:oi_exists} applied to $\nsurf$ and the system $A=(\alpha_1, \dots, \alpha_m)$, and
let  $\lambda_\beta$ be an orientation-enabling curve obtained from 
Proposition~\ref{p:oi_exists} used for $\nsurf$ and the system $B=(\beta_1, \dots, \beta_n)$.
 
According to Lemma~\ref{l:oi_hom}, there is a 
$\partial$-automorphism $\psi$ of $\nsurf$ transforming $\lambda_\beta$ 
to $\lambda_\alpha$. This homeomorphism induces a new system of 
curves $B_{\psi} := (\psi(\beta_1), \dots, \psi(\beta_n))$.

We cut $\nsurf$ along $\lambda_\alpha$, obtaining a new, orientable
surface $\surf$. By Lemma~\ref{l:cut_nonor}, $\surf$ has genus $g'$
 and $h'$ holes. 
By the choice of $\lambda_\alpha$, the system $A$ and the (closed) curves $\lambda_\alpha$ have at most $O(m+g)$ intersections.
Similarly, by our choices of $\lambda_\beta$ and of $\psi$, the system $B_\psi$ and $\lambda_\alpha=\psi(\lambda_\beta)$ 
have at most $O(n + g)$ intersections. Thus, $A$ induces a system $A'$ of $m' \leq c(m + g)$ 
new curves on $\surf$, and $B_\psi$ induces a system $B'$ of $n' \leq c(n + g)$ 
new curves on $\surf$.
By the definition of $f$ and monotonicity, we find a $\partial$-automorphism $\varphi'$
of $\surf$  such that $A'$ has at most $f_{g',h'}(c(g+m),c(g+n))$ 
intersections with $\varphi'(B')$.

By the construction, $\varphi'$ is compatible with the operation 
of undoing the cutting of $\nsurf$ along $\lambda_\alpha$,
i.e., $\varphi'$ induces a $\partial$-automorphism $\varphi$ of $\nsurf$, and this $\varphi$ yields the desired bound on the 
entanglement number of $A$ and~$B$.
\end{proof}

\subsection{Uniqueness of Orientation-Enabling Curves}
In this section, we prove Lemma~\ref{l:oi_hom} (which is fairly easy, using the classification of surfaces).
First, however, we briefly digress to describe the promised example that explains why the compatibility assumptions 
in the lemma are necessary. (The reader may skip this example since
it is not used in any of the proofs.)

\begin{example}
\label{e:many_io}
  Let us consider a fixed nonorientable surface $\nsurf$; for concreteness,
  let us take the projective plane with $4$ holes. We assume that $\nsurf$ 
  is obtained by identifying antipodal points on the boundary of 
the disk with holes.
Let us consider orientation-enabling curves
  $\kappa$ and $\lambda$ as below: 
\immfig{many_io_cycles}
  We want to show that there is no $\partial$-automorphism 
of $\nsurf$ transforming
  $\lambda$ to $\kappa$.

We see that the holes $h_2$ are (locally) on the same side of $\kappa$ whereas
they are on different sides of $\lambda$.
Let $\nsurf'$ be the surface
  obtained by gluing $h_2$ and $h_3$ according to the indicated orientations. If
  there is a $\partial$-automorphism transforming $\lambda$ to $\kappa$, then
  the surfaces $\nsurf'_{\langle \lambda \rangle}$ and $\nsurf'_{\langle \kappa
  \rangle}$ must be homeomorphic. However, $\nsurf'_{\langle \kappa \rangle}$
  is obtained from $\nsurf_{\langle \kappa \rangle}$ by introducing a
  \emph{cross-handle} (i.e., two cross-caps) since the orientations of $h_2$ and $h_3$ are compatible on 
  $\nsurf_{\langle \kappa \rangle}$, and thus $\nsurf'_{\langle \kappa
  \rangle}$ is a nonorientable surface. On the other hand, $\nsurf'_{\langle 
  \lambda \rangle}$ is obtained by introducing a handle 
  (think of moving $h_3$ as the arrow in the picture above
indicates). Therefore, $\nsurf'_{\langle \lambda
  \rangle}$ is orientable. We conclude that there is no $\partial$-automorphism
  of $\nsurf$ transforming $\lambda$ to $\kappa$.

  By this approach, if we have $h$ holes, we can construct $2^{h-1}$ different
  orientation-enabling curves with respect to $\partial$-homeomorphisms. (By
  an approach similar to the proof of Lemma~\ref{l:oi_hom}, one can actually
  see that there are exactly $2^{h-1}$ different orientation-enabling
  curves, but we will not need this in what follows.) 
\end{example}


We now proceed to provide the details for the proof of Lemma~\ref{l:oi_hom}. 

\begin{proof}[Proof of Lemma~\ref{l:oi_hom}]
  Both $\nsurf_{\langle \lambda \rangle}$ and $\nsurf_{\langle \kappa \rangle}$ have the same number of holes and
  same genus according to Lemma~\ref{l:cut_nonor}, and so they
  are homeomorphic. The idea is that a homeomorphism $\psi'$ of
  $\nsurf_{\langle \lambda \rangle}$ and $\nsurf_{\langle \kappa \rangle}$ induces the required
$\partial$-automorphism $\psi$ of $\nsurf$ simply by undoing the operations of cutting $\nsurf$ along
$\lambda$ and $\kappa$, respectively. 
We need to be little careful, however, and to check that $\psi'$ preserves the boundary and is 
compatible with the gluing.

Let $B_\lambda$ be the part of the boundary of $\nsurf_{\langle \lambda \rangle}$ obtained from
$\lambda$ when cutting $\nsurf$. According to Lemma~\ref{l:cut_nonor},
$B_\lambda$ consists of one or two closed curves, depending of the parity of $g$. We define $B_\kappa$ analogously.
We have an involution $i_{\lambda}$ on $B_\lambda$ such that the identification
of all pairs $x$ and $i_\lambda(x)$ yields $\nsurf$. We have an analogous
involution $i_{\kappa}$ on $B_\kappa$. We need a homeomorphism
$\psi'\colon \nsurf_{\langle \lambda \rangle} \to \nsurf_{\langle \kappa
\rangle}$ that is compatible with these
involutions (that is, $\psi'  i_\lambda = i_\kappa \psi'$ on
$B_\lambda$), so that gluing back induces an automorphism of
$\nsurf$. We also need that $\psi'$ fixes the other holes so that
we obtain a $\partial$-automorphism.

We can define $\psi'$ first on $\partial \nsurf_{\langle \lambda \rangle}$ so that the requirements
above are satisfied. Due to our compatibility assumptions, we can use
Corollary~\ref{c:comp_ori} to get $\psi'$ on the whole $\nsurf_{\langle \lambda
\rangle}$. As we have
already mentioned, we obtain the required $\psi$ by gluing back
$\nsurf_{\langle \lambda \rangle}$
and $\nsurf_{\langle \kappa \rangle}$ to $\nsurf$.
\end{proof}

\subsection{Existence of Orientation-Enabling Curves}

In this section, we prove Proposition~\ref{p:oi_exists}.

The proof will be subdivided into several steps.
As in the proof of Proposition~\ref{p:bound_g}~(ii),
we will replace the given system $D$ of curves by a suitable
triangulation of the surface and show that there exists an
orientation-enabling curve $\lambda$ in $\nsurf$ that
intersects the edges of the triangulation in a controlled way. 
We will look for $\lambda$ by choosing local orientations of
the triangles of (a suitable refinement of) the given triangulation of $\nsurf$. 
Then $\lambda$ will appear as the ``ceasefire line''
 where the local orientations disagree. This will
automatically guarantee that the surface $\cut\nsurf\lambda$ 
obtained after cutting along $\lambda$ is orientable. However, we still have to
argue that we can choose the local orientations so that
$\lambda$ is a single closed curve, and so that
it does not intersect the original
triangulation too often. Below we provide the details.

\heading{Local orientations.} Let us assume that $\nsurf$ is a triangulated
surface. We equip each triangle with a \emph{local orientation} (which can be
given by a choice of a cyclic order on the vertices of triangle). We say that the
orientations of two neighboring triangles are \emph{coherent} if they are
locally both clockwise or both counterclockwise.\footnote{Note that we cannot
speak of clockwise or counter-clockwise direction in global sense on whole
$\nsurf$ since we expect to work with nonorientable surfaces. However, we
still can do this locally on a rather trivial orientable surface consisting of
the two triangles.}
\immfig{coherent_triangles_simple}
Given a choice $\omega$ of local orientations on all triangles of $\nsurf$ we
create a graph $G_\omega$ embedded in $\nsurf$ consisting of all edges of the
triangulation for which the two neighboring triangles are not coherent. 
(Using the terminology of ~\cite{MoharThomassen}, this
corresponds to the edges, in the dual graph, of \emph{signature} $-1$.)
\immfig{local_orientation_graph}
By $\cut\nsurf\omega$ we denote the (possibly disconnected) 
surface obtained from $\nsurf$ by cutting along $G_\omega$. 
The surface $\cut\nsurf\omega$ is orientable by the choice of the cut edges. 
Therefore, in particular, if $G_\omega$ consists of a single closed curve, then this is
an orientation-enabling curve.
%
%


Given these preliminaries we can prove the following auxiliary proposition
resembling Proposition~\ref{p:oi_exists} for surfaces without boundary.

\begin{proposition}
\label{p:oi_no_holes}
Let $\nsurf$ be a nonorientable surface without boundary with a fixed triangulation with
total of $n$ vertices and edges. Then there is an orientation-enabling curve
avoiding the vertices of $\nsurf$ and meeting the edges of $\nsurf$ in at most $2n$
intersections.
\end{proposition}

\begin{proof}
First we create a certain collection of closed curves on $\nsurf$. 
Let $\omega$ be a choice of local orientations. 
For every vertex $u$
we pair edges of $G_\omega$ incident to $u$ so that the two edges in every pair
are neighbors in the cyclic order. This is possible since each edge corresponds
to a change of local orientations and when we travel around $u$ we have to
observe an even number of changes.
We shorten each edge $\varepsilon$ of $G_\omega$ and shift it a little,
obtaining a new edge $\hat \varepsilon$ that avoids the edges of the triangulation of $\nsurf$. 
We connect these shortened edges according to the chosen pairs:
\immfig{reconnect_more}

In this way, we obtain a system of closed curves $(\gamma_1, \dots, \gamma_t)$ 
(understood as curves in $\nsurf$). 
Moreover, we can consider this system of curves 
as $G_{\eta}$ where $\eta$ is
a choice of local orientations of some suitable refinement of the
original triangulation of $\nsurf$. 

Further we observe that $G_{\eta}$ meets each edge of $\nsurf$ at most twice 
(once next to each vertex of $G_{\omega}$; we emphasize that 
by an edge of $\nsurf$ we mean an edge of the original triangulation of $\nsurf$).

If we are lucky and $t=1$, that is, $G_{\eta}$ consists of a single closed
curve, then
we deduce that this curve is the curve we seek and we are done.

If $t > 1$ we still have to modify the local orientations in order to obtain a
single closed curve. In this case we will find a further refinement of the triangulation of $\nsurf$
and a choice of local orientations $\vartheta$ such that $G_{\vartheta}$
consists of $t-1$ closed curves and $G_{\vartheta}$ still meets each edge of the original triangulation of $\nsurf$ at most twice. After repeating this step $(t-1)$ times we obtain
the required closed curve.

Let $G^*$ be the graph dual to the triangulation of $\nsurf$. That is,
the vertices of $G^*$ are the triangles of $\nsurf$ and the edges of $G^*$ are the pairs of triangles
sharing an edge. Let $\tau_1$ and $\tau_2$ be two triangles closest in $G^*$ such
that $\tau_1$ contains a part of some curve $\gamma_i$ and $\tau_2$ contains
a part of some curve $\gamma_j$ with $i \neq j$ (possibly $\tau_1 = \tau_2$).

We want to connect $\gamma_i$ and $\gamma_j$ with an arc $\delta$ that is
minimal in the following sense. First of all we assume that $\delta$ belongs
only to triangles of some preselected shortest path between $\tau_1$ and
$\tau_2$ in $G^*$. We also assume that it intersects each edge of $\nsurf$ at
most once. Finally, we can also assume that $\delta$ intersects $G_\eta$ only
in endpoints of $\delta$, for otherwise, we could shorten $\delta$ (this might
require changing the indices $i$ or $j$ if $\tau_1 = \tau_2$ and this triangle
contain other curve(s) $\gamma_k$). 
We observe that all the inner triangles on the
preselected shortest path between $\tau_1$ and $\tau_2$ are disjoint from
$G_\eta$ due to our choice of $\tau_1$ and $\tau_2$. It follows that 
if $\delta$ intersects an edge of $\nsurf$, then this edge is not intersected
by $G_\eta$.

Now we consider two arcs $\delta_1$ and $\delta_2$ parallel to $\delta$ (both
of them join $\gamma_i$ and $\gamma_j$). We join $\gamma_i$ and $\gamma_j$
into a single closed curve $\gamma'$ along $\delta_1$ and $\delta_2$:
\immfig{joining_cycles_new}

After a suitable refinement of the triangulation we change the orientation of the narrow region
between $\delta_1$, $\delta_2$ and of the two tiny segments of $\gamma_i$ and
$\gamma_j$:
\immfig{changing_ori_new}
This way we obtain the required new choice of local
orientations $\vartheta$. The corresponding graph $G_\vartheta$ consist of $\gamma'$
and all closed curves (cycles) of $G_\eta$ except $\gamma_i$ and $\gamma_j$, that is, it has
$t-1$ closed curves as required. In addition, it intersects each edge of $\nsurf$ at
most twice due to the choice of $\delta$. This finishes the proof.
\end{proof}

Now we are ready to prove Proposition~\ref{p:oi_exists}.

\begin{proof}[Proof of Proposition~\ref{p:oi_exists}.]
First we contract all boundary holes $\gamma_i$ to points $\hat \gamma_i$; 
in this
way, we obtain a surface $\hat \nsurf$. We remember the orientation of $\gamma_i$ as
one of two possible directions of how to travel around $\hat \gamma_i$ in some
neighborhood of $\hat \gamma_i$ (it does not make sense to consider whether this
direction is clockwise or counter-clockwise, since $\nsurf$ is not orientable).
We also let $\hat D = (\hat \delta_1, \dots, \hat \delta_n)$ be the
system of curves on $\hat \nsurf$ corresponding to $D$ on $\nsurf$.

Now we form a graph $G$ embedded in $\hat \nsurf$ in the following way.
The vertex set of $G$ consists of all endpoints of arcs in $\hat D$. For a
closed curve 
in $\hat D$, we pick a vertex on this curve. Each arc in $\hat D$ induces an
edge in $G$. Each closed curve in $\hat D$ induces a loop in $G$. This finishes the
construction of $G$. Note that the $\hat \gamma_i$ are situated either in the
vertices of $G$ or in the faces, but not in the interiors of the edges. 
Also note that no
two holes are contracted to the same vertex.

The graph $G$ has $O(n)$ vertices and edges. Let $G'$ be the graph from
Lemma~\ref{l:O(g+n)} containing some subdivision of $G$ and having $O(g+n)$
vertices and edges. By possibly perturbing $G'$, we can assume that the $\hat
\gamma_i$ are not in the interiors of edges of $G'$.

Using Proposition~\ref{p:oi_no_holes} we find an orientation-enabling curve $\hat
\lambda_0$ that intersects each edge of $G'$ at most twice. We would like to
decontract the holes transforming $\hat \lambda_0$ to $\lambda_0$ on $\nsurf$
getting the required curve. However, the problem is that the orientations of
curves on $\nsurf_{\langle \lambda_0 \rangle}$ may not be compatible as we require. We still
have to modify $\lambda_0$. We use an approach similar the proof of the
previous proposition.

Let $G^*$ be the dual graph to $G'$.
Let us also equip $\lambda_0$ with some
orientation. Note that $\lambda_0$ can be one-sided or two sided in $\nsurf$.
In the second case, it is important to observe that the two closed curves
originating from $\lambda_0$ on $\nsurf_{\langle \lambda_0 \rangle}$ have compatible orientations.
(Otherwise, gluing along them would mean introducing a handle, contradicting
the non-orientability of $\nsurf$.)

Let $\gamma_i$ be a hole such that the orientation of $\gamma_i$ is not
compatible with $\lambda_0$ on $\nsurf_{\langle \lambda_0 \rangle}$. Let $\tau_1$ be a
triangle containing $\hat \gamma_i$ (if $\hat \gamma_i$ is a vertex,
it may be contained in several triangles). Let
$\tau_2$ be a triangle containing a part of $\hat \lambda_0$ closest to
$\tau_1$ in $G^*$. We connect $\hat \lambda_0$ with $\hat \gamma_i$ by an arc
$\delta$ minimal in the following sense. We assume that $\delta$ uses triangles
of some prescribed shortest path between $\tau_1$ and $\tau_2$. It intersects
each edge on this path at most once. It also has no other intersection with
$\lambda_0$, for otherwise, it could be shortened.

We `pull a finger' along $\delta$ obtaining a new curve $\hat \lambda_1$:
\immfig{finger}
After decontractions, we obtain that the resulting
$\lambda_1$ and $\gamma_i$ are compatible on $\nsurf_{\langle \lambda_1 \rangle}$.
The compatibility of $\lambda_1$ with respect to other boundary curves is not
affected.


The curve $\hat \lambda_1$ can have more intersections with the
edges of $G'$. However,
the new intersections appear either on edges that were not
intersected previously (at most twice), or, if $\hat \gamma_i$  is a vertex,
 on the edges incident to it.

We can apply this procedure repeatedly, obtaining $\hat \lambda_2$,
$\hat \lambda_3$, etc. After a finite number of steps we
obtain a curve $\hat \lambda_k$ such that the corresponding
$\lambda_k$ is already compatible
with all holes on $\nsurf_{\langle \lambda_k \rangle}$. This curve is our desired curve
$\lambda$, since during the procedure we have introduced at most $2|E(G')| +
\sum \deg v$ new intersections, where the sum is over all vertices $v$ of $G'$.
Thus we are still within the $O(g+n)$ bound.
\end{proof}

\ifGD
\else
\subsection*{Acknowledgement}
We would like to thank the authors of \cite{Geelen:Explicit-bounds-for-graph-minors-2013} for making a draft of their paper available to us,
and, in particular, T.~Huynh for an e-mail correspondence. We also thank an
anonymous referee for many valuable comments and in particular for a suggestion
of using local orientations in the proof of Proposition~\ref{p:oi_exists} which
replaced our original (longer) homology-based proof.
\bibliographystyle{alpha}
\bibliography{cx.bib}
\fi

\end{document}